\theoremstyle{definition}
\newtheorem{Theorem}[equation]{Theorem}
\newtheorem{Corollary}[equation]{Corollary}
\newtheorem{Lemma}[equation]{Lemma}
\newtheorem{Proposition}[equation]{Proposition}
\newtheorem{Remark}{Remark}
\newtheorem{Definition}[equation]{Definition}
\newtheorem{Example}[equation]{Example}
\newtheorem{Notation}[equation]{Notation}
\newtheorem{Assumption}[equation]{Assumption}
\numberwithin{equation}{section}
\numberwithin{figure}{section}
\newcommand{\Z}{{\mathbb Z}}
\newcommand{\Q}{{\mathbb Q}}
\newcommand{\N}{{\mathbb N}}
\newcommand{\MM}{\textbf{M}}
\newcommand{\KK}{\textbf{K}}
\newcommand{\OO}{\textbf{\"O}}
\newcommand{\HO}{\textbf{H\"O}}
\newcommand{\CC}{\textbf{\c{C}}}
\begin{document}

\title{Remarks on Enveloping Semigroups}
\date{November 16, 2019}

\author[1]{Mahir Bilen Can}
\affil[1]{mahirbilencan@gmail.com}

\maketitle

\begin{abstract}
The local structures of enveloping semigroups of simple groups are investigated. 
All J-coirreducible connected stabilizer submonoids are determined.
The notion of a navel of a reductive monoid is introduced.
The cross-section lattice of the enveloping monoid is shown to be atomic.  
In type A, the generating series for the number of $G\times G$-orbits is found. 
\vspace{.5cm}

\noindent
\textbf{Keywords:} Enveloping semigroups, asymptotic semigroups, J-coirreducible monoids, navel \\ 
\noindent 
\textbf{MSC:}{ 20M32, 14M27, 06A06} 
\end{abstract}

\section{Introduction}\label{S:Introduction}

The purpose of our paper is to analyze the local submonoids of enveloping semigroups. 
More precisely, we investigate submonoids of the form $eMe$ and $\overline{G_e^0}$,
where $M$ is an enveloping semigroup, $e\in M$ is an idempotent, and $G_e^0$ is 
the connected component of the identity of the stabilizer subgroup of $e$ in the unit group of $M$. 
Let $M_e$ denote monoid $\overline{G_e^0}$, which we call the {\em connected stabilizer of $e$ in $M$}.
We will characterize all idempotents $e$ for which 
$M_e$ has the property that $M_e \setminus G_e^0$ is a connected algebraic semigroup. 
To explain our results in more detail, and to motivate our discussion, we will introduce the enveloping semigroups 
in the historical order of their discovery.

Let $M$ be a reductive monoid with the group of invertible elements $G:=G(M)$ defined over an algebraically closed field $k^*$. 
Since $G$ is a connected reductive group, let us write it in the form $G \cong (G_0\times Z_G)/Z_0$,
where $G_0$ is the derived subgroup of $G$, $Z_G$ is the connected center of $G$, and $Z_0$ is the center of $G_0$. 
This data gives us an affine quotient morphism $\pi_M: M \to A_M$, where $A_M:=\textrm{Spec}\ ( k[M]^{G_0\times G_0})$.
The quotient $A_M$ is called the {\em abelianization} of $M$, and it is a $(G/G_0\times G/G_0)$-equivariant embedding of $G/G_0$.

Let $\mathcal{FM}(G_0)$ denote the set of reductive monoids $M$ such that 
\begin{enumerate}
\item the derived subgroup of $G(M)$ is $G_0$, 
\item $\pi_M: M\to A_M$ is flat with reduced and irreducible fibers.
\end{enumerate}
Then, according to Vinberg~\cite{Vinberg1}, there is a unique normal reductive monoid with zero, denoted by 
$\textrm{Env}(G_0)$, in $\mathcal{FM}(G_0)$ such that 
\begin{itemize}
\item[-] the unit-group of $\textrm{Env}(G_0)$ is $(G_0\times T_0)/Z_0$, where $T_0$ is a maximal torus in $G_0$;
\item[-] there is an isomorphism $A_{\textrm{Env}(G_0)} \cong \mathbb{A}^l$, where $l = \dim T_0$; 
\item[-] every $M$ in $\mathcal{FM}(G_0)$ is a fiber product of the form 
\hbox{$M\cong A_M \times_{\mathbb{A}^l} \textrm{Env}(G_0)$}.
\end{itemize}
This remarkable semigroup, $\textrm{Env}(G_0)$, is called the {\em enveloping semigroup of $G_0$}.
In a related work~\cite{Vinberg2}, Vinberg showed also that the preimage $\pi_M^{-1}(0)$,
denoted by $\textrm{As}(G_0)$, is equal to the horospherical contraction of $G_0$ as a $G_0\times G_0$-variety. 
Following Vinberg's terminology, we will call $\textrm{As}(G_0)$ the {\em asymptotic semigroup} of $G_0$.

The results of Vinberg in~\cite{Vinberg1,Vinberg2}, which were originally obtained over an algebraically closed field of characteristic zero,
are shown to hold true in positive characteristic by Rittatore in~\cite{Rittatore:Thesis,Rittatore2001}.
Rittatore's approach, which uses the theory of spherical varieties, has been further generalized by Alexeev and Brion in~\cite{AB1} to 
a class of spherical varieties that they called the ``reductive varieties''. 
Since these objects are beyond the scope of our work, we will not discuss them; 
however, we anticipate extensions of some our results in their setting.
To explain our progress, and to put our work in the right perspective, first, we will briefly mention some observations due to Renner.

In his memoir~\cite{Renner}, Renner translated the aforementioned results of Vinberg and Rittatore to the 
language of idempotents. In particular, in the proof of~\cite[Theorem 6.18]{Renner}, which is about the ``type-map'' 
for enveloping semigroups, Renner introduced certain J-coirreducible monoids
(the precise definition of the type map will be given in the sequel).
One of these monoids fills the gap of the asymptotic semigroup, 
while the others entertain similar roles for the ``degenerate asymptotic semigroups.''
Here, by the filling the gap, we mean the adjoining of the group $k^*\cdot G_0$ to $\textrm{As}(G_0)$ 
so that $k^*\cdot G_0\sqcup \textrm{As}(G_0)$ becomes a normal semisimple monoid. 
Such an enlargement of $\textrm{As}(G_0)$ was already shown by Vinberg~\cite{Vinberg2} by an algebraic method, 
while Renner's approach is more geometric, which we will explain next. 
Let $(W,S)$ denote the Coxeter system, where $W$ is the Weyl group of $G_0$. Let $M$ denote $\textrm{Env}(G_0)$,
and let $G$ denote the unit group of $M$. 
Then for each nonempty subset $I\subset S$, there is a convergent one-parameter subgroup  
$\lambda_I : k^* \to A_M$ such that the limit $f:=f_I = \lim_{t\to 0} \lambda(t)$ is an idempotent in $A_M$. 
Let us define $M_I:= \pi^{-1}( \overline{\lambda_I(k^*)})$. 
In this notation, the following statements are observed by Renner in~\cite{Renner}:
\begin{enumerate}
\item[(1)] $M_I$ is a {\em J-coirreducible monoid}, that is to say, $M_I\setminus G(M_I)$ is a connected algebraic semigroup; 
\item[(2)] the ``cross-section lattice'' of $M$ is covered by the cross-section lattices of $M_I$'s,
\[
\Lambda(M) = \{1_G \} \bigsqcup_{I\subset S, I\neq \emptyset} \Lambda(M_I)\setminus \{1_{G(M_I)}\}.
\]
Here, by a cross-section lattice we mean a finite set of idempotents parametrizing the double-cosets of the unit group of the monoid.  
\item[(3)] If $I=S$, then $\textrm{As}(G_0) =  M_S \setminus  G(M_S)$.
\end{enumerate}
In summary, once a cross-section lattice for $M$ is fixed, one finds $2^{|S|-1}$  
J-coirreducible monoids which compartmentalize the idempotents of $M$ in such a way that 
one of the J-coirreducible monoids is equal to the asymptotic semigroup of $G_0$.

We are now ready to give an overview of our paper while describing its main results.
In Section~\ref{S:Preliminaries}, we review some fundamental results on the reductive monoids which we will use in the sequel. 
Starting from Section~\ref{S:Rank1}, we will focus on the enveloping semigroups $M=\text{Env}(G_0)$, 
where $G_0$ is a semisimple algebraic group such that the derived subgroup $(G_0,G_0)$ is a simple algebraic group.
The advantage of this restriction is that the Coxeter-Dynkin diagram of $G_0$ is connected. 
Let $\Lambda_1$ denote the minimal nonzero elements of the cross-section lattice of $M$.
In the first main result of the paper, we characterize the reductive monoids $eMe$ and $M_e$ ($e\in \Lambda_1$). 
We show that exactly $|S|$ of $eMe$'s with $e\in \Lambda_1$ are $G\times G$-stable. 
Furthermore, we show that such monoids are one dimensional, hence we obtain a description of the 
$G\times G$-stable curves in $M$. We determine the unit groups of $M_e$ for $e\in \Lambda_1$. 
In Section~\ref{S:Jcoirreducible}, we focus on the local monoids $eMe$ and $M_e$ where the rank of $e$ is $>1$. 
In particular, we characterize $e\in \Lambda$ for which the corresponding connected stabilizer monoid $M_e$ is 
J-coirreducible. In addition, if $G_0$ is one of the types $\text{A}_n,\text{B}_n,\text{C}_n, F_4$, or $G_2$,
then we characterize the ``J-linear'' connected stabilizer monoids (Theorem~\ref{T:Jcoirreducibles}).
Here, by a {\em J-linear monoid} we refer to a reductive monoid $M$ such that $\Lambda(M) \setminus \{ 0, 1\}$
has a unique minimal and a unique maximal element. 
In Theorem~\ref{T:unitgroups}, we determine the (unit) groups $C_G(e)=\{g\in G:\ ge=eg\},G(eMe)=eC_G(e)$, and $G(M_e) =G_e^0$ 
provided that $M_e$ is a maximal dimensional J-coirreducible monoid. It turns out that for such connected stabilizers, $G_e^0 = G_0$. 
The purpose of Section~\ref{S:Navel} is to show that, for $M=\textrm{Env}(G_0)$, 
there is always a unique idempotent $e$ in $\Lambda(M)$ such that both of the local monoids $M_e$ and $eMe$ are affine torus embeddings in $M$. 
Having such an idempotent is a rare phenomenon. 
We prove that in a J-coirreducible monoid of ``type $J$'' an idempotent $e\in \Lambda$ 
has the property that $C_G(e)$ is a torus embedding if and only if $J=\emptyset$. 
In the same section, we prove a similar result for the reductive monoids whose cross-section lattices have unique nonzero elements.
The purpose of Section~\ref{S:Atomic} is to show that the cross-section lattice of $M$ is generated by its rank 1 elements. 
Equivalently (and more precisely), we show that $\Lambda$ is an atomic lattice. 
In Section~\ref{S:Generatingfunction}, we present a combinatorial result, Theorem~\ref{T:numbers}, 
which gives a count of the number $G\times G$-orbits in the enveloping semigroup $\textrm{Env}(\textrm{SL}_n)$. 
More precisely, it states that the generating series of the number $d_n$, $n=0,1,2,\dots$ of $G\times G$-orbits 
in the enveloping monoid of $\textrm{SL}_n$ is given by 
\begin{align*}
\sum_{n\geq 0} d_n x^n  
= \frac{ 1-2x+2x^2   }{(1-5x + 6x^2 -4x^3)}   = 1+ 3x + 11x^2 + 41x^3 + 151x^4 + 553 x^5 + O(x^6).
\end{align*}

Before we finish our introduction, we want to mention a related work of ours. 
In~\cite{Can:Dual}, we investigate the nilpotent varieties and various partial orders on the asymptotic semigroups. 
In particular, we show that the nilpotent variety of $\textrm{As}(G_0)$ is an equidimensional variety, and we determine its Putcha poset.
We anticipate that some of our results from~\cite{Can:Dual} will have natural extensions to all J-coirreducible monoids. 
Also, by using the results of~\cite{Wedhorn, Huruguen}, and~\cite{Can:Mobile}, 
we can extend all results of this paper (and most of the results of~\cite{Can:Dual}) to the setting of the reductive monoid $k$-schemes,
where $k$ is any perfect field. We plan to come back to this generalization in a future paper. 
\\

\textbf{Acknowledgements.} 
We are grateful to the referees for their comments which improved the quality of our paper. 
In an earlier version of this article, we claimed that 
an enveloping semigroup $M$ is rationally smooth if and only if $M$ is of type A. 
A referee pointed to us that, even in type A, $M$ cannot be rationally smooth. 
We are thankful to the referee for the precise explanation, 
which has led us to look closer at the local structure of the enveloping monoids.
We thank Lex Renner for his guidance and help, not just for this project. 
Finally, we acknowledge that this research was partially supported by a grant from the Louisiana Board of Regents.

\section{Preliminaries}\label{S:Preliminaries}

In this article, we will focus on affine algebraic monoids defined over an algebraically closed field. 
We will follow the common terminology as set forward in the textbooks~\cite{Putcha} and~\cite{Renner}. 
In particular, we fix the following notation:
$$
\begin{array}{lcl}
M &: & \text{ a reductive monoid with zero;}\\
G &: & \text{ the unit group of $M$ (so, $G$ is a connected reductive group);}\\
T &: & \text{ a maximal torus in $G$;}\\
G_0 & : & \text{ the semisimple part of $G$;}\\
T_0 & : & \text{ a maximal torus in $G_0$ such that $T_0= T\cap G_0$;}\\
W &: & \text{ the Weyl group of $(G,T)$ (equivalently, of $(G_0,T_0)$);}\\
B &: & \text{ a Borel subgroup of $G$ such that $T\subset B$;}\\
\ell_W & : & \text{ the length function defined by $\ell_W(w):=\dim BwB - \dim B$ for $w\in W$;}\\
S &: & \text{ the set of Coxeter generators of $W$ determined by $B$;}\\
\overline{T} & : & \text{ the Zariski closure of $T$ in $M$;}\\
E(M), E(\overline{T}) &: & \text{ the sets of idempotents of $M$ and $\overline{T}$, respectively;}\\
\Lambda & : & \text{ the cross-section lattice determined by $(M,G,B,T)$;}\\
\lambda & : & \text{ the type-map $\lambda : \Lambda \to 2^S$ (the power set of $S$);}\\
R &:& \text{ the Renner monoid of $(M,T)$;}\\
\leq & : & \text{ the Bruhat-Chevalley-Renner order on $R$}.
\end{array}
$$

For the convenience of the reader, we will review the definitions of some of these objects. 
First of all, the order $\leq$ on $R$ is defined by 
\begin{equation}\label{E:BRordering}
\sigma \leq \tau\ \hspace{.51cm} \mbox{if and only if}\hspace{.51cm} 
B\sigma B \subseteq \overline{B\tau B}, \ \text{ where $\sigma, \tau \in R$.}
\end{equation}
The induced poset structure on $W$, which is induced from $R$ is the same as the well known Bruhat-Chevalley poset structure on $W$. 
There is a canonical partial order $\leq$ on the set of idempotents $E(M)$ of $M$ 
(hence, on the set of idempotents $E(\overline{T})$ of $\overline{T}$) defined by  
\begin{equation}\label{E:crossorder}
e\leq f \hspace{.51cm} \mbox{if and only if} \hspace{.51cm}  ef=e=fe.
\end{equation}
The set of idempotents of $\overline{T}$ is invariant under the conjugation action of $W$.
A subset $\Lambda \subseteq E(\overline{T})$ is called a {\em cross-section lattice}  
if $\Lambda$ is a set of representatives for the $W$-orbits on $E(\overline{T})$ and the bijection 
$\Lambda \rightarrow G \backslash M / G$ defined by $e \mapsto GeG$ is order preserving.

The {\em right centralizer of $\Lambda$ in $G$}, denoted by $C_G^r(\Lambda)$, is the subgroup 
\[
C_G^r(\Lambda) = \{ g\in G:\ ge = ege \text{ for all } e\in \Lambda\}.
\]	 
Assuming that $M$ has a zero, for all Borel subgroups of $G$ containing $T$ 
the set $\Lambda(B) = \{ e\in E(\overline{T}):\ Be=eBe\}$ is a cross-section lattice 
with $B= C_G^r(\Lambda)$, and for any cross-section lattice $\Lambda$, the 
right centralizer $C_G^r(\Lambda)$ is a Borel subgroup containing $T$ with 
$\Lambda = \Lambda ( C_G^r(\Lambda))$, see~\cite[Theorem 9.10]{Putcha}.

The decomposition of $M$ into $G\times G$-orbits has a counterpart in the Renner monoid, 
\[
R = \bigsqcup_{e\in \Lambda} WeW.
\]
The partial order (\ref{E:crossorder}) on $\Lambda$ agrees with the order induced from Bruhat-Chevalley-Renner order (\ref{E:BRordering}).
A fundamental result of Putcha asserts that (for any connected monoid $M$ with a zero) 
$E(\overline{T})$ is a relatively complemented lattice, anti-isomorphic to a face lattice of a convex polytope,
see~\cite[Theorem 6.20]{Putcha}. 
Let $\Lambda$ be a cross-section lattice in $E(\overline{T})$.  
The Weyl group of $T$ (relative to $B= C_G^r(\Lambda)$) acts on $E(\overline{T})$, and furthermore,
we have 
\[
E(\overline{T})= \bigcup_{w\in W} w \Lambda w^{-1}.
\]

The cross-section lattice with the order (\ref{E:crossorder}) is a graded poset; 
the rank function is given by 
\[
\text{rank}(e) = \dim T e \qquad (e\in \Lambda).
\]
A reductive monoid with zero is called {\em J-irreducible monoid} if it has a unique nonzero minimal $G\times G$-orbit. 
Equivalently, if the cross-section lattice of $M$ has a unique nonzero idempotent. 
Let us call $M\setminus G$ the {\em boundary of $M$}. 
We call $M$ a {\em J-coirreducible monoid} if the boundary of $M$ is the closure of a 
single $G\times G$-orbit. 
In other words, 
if $M$ is J-irreducible, then $1= | \{ e\in \Lambda :\ \text{rank}(e)=1 \} |$; 
if $M$ is J-coirreducible, then $1= | \{ e\in \Lambda :\ \text{corank}(e)=1 \} |$.
For more on these monoids, see~\cite[Chapter 14]{Putcha}.

Let $w$ be an element in $W$. Then $\ell_W(w)$ is equal to the minimal number of 
simple reflections $s_{i_1},\dots, s_{i_r}$ from $S$ with $w=s_{i_1}\cdots s_{i_r}$.
A subgroup that is generated by a subset $I\subset S$ will be denoted by $W_I$ and it will be called a 
{\it parabolic subgroup of $W$}. 
For $I\subseteq S$, we will denote by $D_I$ the following set:
\begin{align}
D_I:= \{ x\in W:\ \ell_W(xw ) = \ell_W(x) + \ell_W(w) \text{ for all } w\in W_I \}.
\end{align}

The {\em type-map}, $\lambda : \Lambda \to 2^S$, is defined by $\lambda(e) := \{s\in S:\ s  e= e s  \}$ for $e \in \Lambda$. 
The containment ordering between $G\times G$-orbit closures in $M$ 
is transferred via $\lambda$ to a sublattice of the Boolean lattice on $S$. 
Associated with $\lambda(e)$ are the following sets:
$\lambda_*(e) := \cap_{f\leq e} \lambda (f)$ and $\lambda^*(e):= \cap_{f\geq e} \lambda(f)$.
We define the subgroups 
\[
W(e):= W_{\lambda(e)},\qquad W_*(e) := W_{\lambda_*(e)},\qquad W^*(e) := W_{\lambda^*(e)}.
\]
Then we have 
\begin{enumerate}
\item $W(e)= \{ a\in W:\ ae=ea \}$,
\item $W^*(e)= \cap_{f\geq e} W(f)$,
\item $W_*(e)= \cap_{f\leq e} W(f) = \{ a\in W:\ ae= ea = e\}$.
\end{enumerate}
We know from~\cite[Chapter 10]{Putcha} that 
$W(e),W^*(e)$, and $W_*(e)$ are parabolic subgroups of $W$, and furthermore, we know that 
$W(e) \cong W^*(e) \times W_*(e)$. 
If $W(e) = W_I$ and $W_*(e) = W_K$ for some subsets $I,K\subset S$, then we define 
$D(e) := D_I$ and $D_*(e):= D_K$. 
\vspace{.25cm}

\textbf{Theorem/Definition (Pennell-Putcha-Renner):}
For every $x\in WeW$ there exist elements $a\in D_*(e), b\in D(e)$, which are uniquely determined by $x$, such that 
\begin{align}\label{A:std}
x= a e b^{-1}.
\end{align}
The decomposition of $x$ in (\ref{A:std}) will be called the {\em standard form of $x$}.
Let $e, f$ be two elements from $\Lambda$. It is proven in~\cite{PPR97} that 
if $x= a eb^{-1}$ and $y= cf d^{-1}$ are two elements in standard form in $R$, then 
\begin{align}\label{A:BCR}
x \leq y \iff e\leq f,\ a \leq cw,\ w^{-1} d^{-1} \leq b^{-1} \qquad \text{for some $w\in W(f)W(e)$.}
\end{align}
We will occasionally write $D(e)^{-1}$ to denote the set $\{ b^{-1} :\ b\in D(e)\}$.

Let $e$ be an idempotent from $M$. 
The {\em right centralizer of $e$}, denoted by $C_G^r(e)$, is the subgroup 
\[
C_G^r(e) := \{ g\in G:\ ge = ege\}.
\]
The {\em left centralizer of $e$}, denoted by $C_G^l(e)$, is defined similarly. 
The left and the right centralizers of $e$ are parabolic subgroups that are opposite to each other. 
In particular, their intersection, $C_G(e):=C_G^r(e) \cap C_G^l(e)$, is a common Levi subgroup.

We now introduce our ``local'' monoids. 
Let $e$ be an idempotent from $M$. 
The stabilizer of $e$ in $G$ is defined as $G_e:= \{g\in G:\ ge = eg =e \}$. 
This is group is not necessarily connected. 
The {\em connected stabilizer of $e$ in $M$} is the reductive monoid defined by 
\begin{align}\label{A:M_e}
M_e := \overline{G_e^0} = \overline{ \{g\in G:\ ge = eg =e \}^0}.
\end{align}
Thus, the group of invertible elements of $M_e$ is given by the connected component of identity $G_e^0\subset G_e$.
It is easy to see that $e$ is the zero element in $M_e$. It is also easy to verify that 
the set of idempotents of $M_e$ is $E(M_e) = \{ f\in E(M) :\ f\geq e \}$, see~\cite[Theorem 6.7]{Putcha}.
In fact, as shown in~\cite[Lemma 10.16]{Putcha}, the cross-section lattice of $M_e$ is given by 
\begin{align}\label{A:CSofM_e}
\Lambda (M_e) = \{ f\in \Lambda (M):\ f\geq e\}.
\end{align}
Another closely related subsemigroup is given by $eMe= \{ x\in M:\ exe=x\}$.
The cross-section lattice of $eMe$ is given by $e\Lambda = \{ f\in \Lambda :\ f\leq e\}$. 
The following lemma will be used several times in the sequel. Its proof is recorded in~\cite[Lemma 1.2.2]{Brion08}.
\begin{Lemma}\label{L:M_e}
Let $e$ be an idempotent in $M$. Then $C_G(e)$ and $G_e$ are reductive groups. 
Furthermore, $G_e$ is a normal subgroup of $C_G(e)$; there is an exact sequence of algebraic groups
\hbox{$1 \to G_e \to C_G(e) \to G(eMe) \to 1$}. 
The normalizer of $G_e$ in $G$ is equal to $C_G(e)$. 
\end{Lemma}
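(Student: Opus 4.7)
The plan is to build the exact sequence first, and derive the reductivity and normalizer statements from it.

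First I would establish that $C_G(e)$ is reductive. By the structure theory of reductive monoids, the right centralizer $C_G^r(e)$ and the left centralizer $C_G^l(e)$ are opposite parabolic subgroups of $G$, whose intersection $C_G(e)$ is therefore a common Levi subgroup. In particular, $C_G(e)$ is connected and reductive.

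Next I would construct the homomorphism $\phi : C_G(e) \to G(eMe)$ by $\phi(g) = ge$. If $g \in C_G(e)$, then $ge = eg$, so $e(ge)e = g e^3 = ge$, hence $ge \in eMe$. Moreover $g^{-1} \in C_G(e)$ and $(ge)(g^{-1}e) = gg^{-1}e = e$ with $e$ the identity of $eMe$, so $ge$ is a unit of $eMe$. The identity $\phi(gh) = ghe = g(he) = g(eh)e \cdot e = (ge)(he)$ (using $he = eh$) confirms $\phi$ is a homomorphism, and its kernel is exactly $\{g \in C_G(e) : ge = e\} = G_e$. Thus $G_e$ is closed and normal in $C_G(e)$. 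Surjectivity of $\phi$ is the main obstacle, and the step I expect to require the most care: one must show that every unit $x \in G(eMe)$ can be lifted to an element of $C_G(e)$. The standard way is to invoke the fact that $eMe$ is itself a reductive monoid with unit group $G(eMe) = eC_G(e)e = eC_G(e)$; this is a consequence of the Levi decomposition of $C_G^r(e)$ and $C_G^l(e)$, together with the description of $eMe$ as the fixed subscheme of the idempotent $e$ acting on $M$ by left-and-right multiplication. Once $G(eMe) = eC_G(e)$ is established, surjectivity of $\phi$ is automatic.

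Given the exact sequence $1 \to G_e \to C_G(e) \to G(eMe) \to 1$, reductivity of $G_e$ follows immediately: the unipotent radical $R_u(G_e^0)$ is a connected unipotent normal subgroup of $C_G(e)^0$, hence contained in $R_u(C_G(e)^0) = \{1\}$, so $G_e$ is reductive.

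For the normalizer statement, the inclusion $C_G(e) \subseteq N_G(G_e)$ is immediate, since if $ge = eg$ then conjugation by $g$ preserves the defining condition $he = eh = e$ of $G_e$. For the reverse, I would use that $e$ is intrinsically determined by $G_e^0$ as the identity element of its Zariski closure $M_e = \overline{G_e^0}$. Any $g \in N_G(G_e)$ normalizes the characteristic subgroup $G_e^0$, so conjugation by $g$ sends $M_e$ onto $gM_eg^{-1} = \overline{gG_e^0g^{-1}} = M_e$ and hence must fix the identity element $e$ of $M_e$; that is, $geg^{-1} = e$, so $g \in C_G(e)$.
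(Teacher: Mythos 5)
The paper does not actually prove this lemma; it refers the reader to Brion's \emph{Local structure of algebraic monoids} (Lemma 1.2.2), so there is no in-text argument to compare against. Your route is the standard one and is essentially sound: $C_G(e)$ is a Levi subgroup of the opposite parabolics $C_G^r(e)$, $C_G^l(e)$, hence connected reductive; the map $g\mapsto ge$ is a homomorphism onto $eC_G(e)$ with kernel $G_e$ (note $G_e\subseteq C_G(e)$, so the kernel really is all of $G_e$); and reductivity of $G_e$ follows since $R_u(G_e^0)$ is characteristic in $G_e^0$, hence normal in the connected reductive group $C_G(e)$, hence trivial. You are right that surjectivity, i.e.\ $G(eMe)=eC_G(e)$, is the crux; you defer it to the structure theory of $eMe$, which is exactly the content of the result the paper itself cites (Putcha, Theorem 6.7; Brion, Lemma 1.2.2), so this is an acceptable level of detail here, though it means the one genuinely nontrivial input is imported rather than proved.

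There is one real error, in the normalizer step: $e$ is \emph{not} the identity element of $M_e=\overline{G_e^0}$ --- the identity of $M_e$ is $1_G$, which every conjugation fixes, so "conjugation must fix the identity of $M_e$" yields nothing. What saves the argument is that $e$ is the \emph{zero} of $M_e$: since $xe=ex=e$ for all $x\in G_e^0$, the same holds on the closure, and $e\in\overline{G_e^0}$ (this is the Putcha fact the paper quotes). A zero element of a monoid is unique and is preserved by any monoid automorphism, and conjugation by $g\in N_G(G_e)$ restricts to a monoid automorphism of $M_e$ because $gG_e^0g^{-1}=G_e^0$ and conjugation commutes with Zariski closure. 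Hence $geg^{-1}=e$ and $g\in C_G(e)$. With "identity" replaced by "zero" (and the containment $e\in\overline{G_e^0}$ made explicit), your proof is correct.
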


The proof of the next lemma can be found in~\cite[Proposition 10.9]{Putcha}.
\begin{Lemma}\label{L:WofeMe}
Let $e$ be an idempotent in $M$.
Then 
\begin{enumerate}
\item $C_W(e) = W(C_G(e))$;
\item $W(eMe) = eC_W(e) = \{ e w :\ w\in C_W(e)\}$;
\item $C_W(e) = W(eMe) \times W(M_e)$.
\end{enumerate}
\end{Lemma}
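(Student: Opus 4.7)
Since the three assertions are $W$-equivariant under $e \mapsto w_0 e w_0^{-1}$ (which intertwines $C_W(e)$, $C_G(e)$, $eMe$, and $M_e$ with their conjugates) and since $E(M)$ is the union of $W$-orbits through $E(\overline T)$, I may assume $e \in E(\overline T)$. The commutativity of $\overline T$ then gives $T \subseteq C_G(e)$, and since $T$ is maximal in $G$ it remains a maximal torus of the reductive group $C_G(e)$ (reductivity by Lemma~\ref{L:M_e}). Part~(1) is now immediate from the definitions: for $w = nT \in W$, the equation $we = ew$ in $R$ is equivalent to $ne = en$ in $M$, that is, to $n \in N_G(T) \cap C_G(e) = N_{C_G(e)}(T)$. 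Hence $C_W(e) = N_{C_G(e)}(T)/T = W(C_G(e))$.

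For part~(2), apply Lemma~\ref{L:M_e} to obtain the surjection $\pi \colon C_G(e) \twoheadrightarrow G(eMe)$, $g \mapsto eg = ge$, with kernel $G_e$. The image $\pi(T) = eT$ is a maximal torus of $G(eMe)$. I claim the induced map
\[
N_{C_G(e)}(T)/T \longrightarrow N_{G(eMe)}(eT)/eT
\]
is surjective. Given $x \in N_{G(eMe)}(eT)$ with lift $g \in C_G(e)$, the subgroup $gTg^{-1}$ is a maximal torus of $\pi^{-1}(eT) = G_e \cdot T$, and by conjugacy of maximal tori in this reductive group there exists $h \in G_e$ with $(hg)T(hg)^{-1} = T$. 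Then $hg \in N_{C_G(e)}(T)$ represents the same class as $x$ in $W(eMe)$. Combining this with part~(1), every element of $W(eMe)$ has the form $en$ for some $n \in N_{C_G(e)}(T)$, i.e.\ $W(eMe) = eC_W(e)$.

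For part~(3), the kernel of the surjection $w \mapsto ew$ from $C_W(e)$ onto $W(eMe)$ consists of classes $nT$ with $en \in eT$, equivalently $n \in T\cdot G_e$; a short manipulation shows that this kernel equals $(N_{C_G(e)}(T) \cap G_e)\cdot T/T$. Since $T \cap G_e^0$ is a maximal torus of $G_e^0$ (the intersection of a maximal torus of the ambient reductive group with a normal reductive subgroup), this kernel is canonically identified with $W(M_e) = N_{G_e^0}(T \cap G_e^0)/(T \cap G_e^0)$, after adjusting lifts by elements of $C_{C_G(e)}(T\cap G_e^0)$ to make them normalize $T$. To promote the resulting exact sequence $1 \to W(M_e) \to C_W(e) \to W(eMe) \to 1$ to a direct product, I will appeal to the structural decomposition $C_G(e)^0 = H \cdot G_e^0$, where $H$ is a reductive subgroup (built from the central torus of $C_G(e)^0$ together with those simple factors not contained in $G_e^0$) that commutes with $G_e^0$ and meets it in a finite central subgroup. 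Then $H$ projects isogenously onto $G(eMe)^0$ and the Weyl groups split:
\[
C_W(e) = W(C_G(e)) = W(H) \times W(G_e^0) = W(eMe) \times W(M_e).
\]

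The main obstacle is the direct-product claim in (3): the short exact sequence itself falls out formally from (1) and (2), but producing a canonical complement requires the non-trivial structural fact that a connected reductive group is the almost-direct product of any of its normal reductive subgroups with a complementary reductive subgroup. This is standard from the theory of root data but would be cited rather than reproved.
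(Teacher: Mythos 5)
The paper offers no proof of this lemma --- it simply cites \cite[Proposition 10.9]{Putcha} --- and your argument is essentially the standard one behind that reference: restrict to $e\in E(\overline T)$ so that $T\subseteq C_G(e)$ is a maximal torus, and push $N_{C_G(e)}(T)$ through the exact sequence $1\to G_e\to C_G(e)\to G(eMe)\to 1$ of Lemma~\ref{L:M_e}. The outline is sound, and the surjectivity argument in (2) via conjugacy of maximal tori in $\pi^{-1}(eT)=G_e\cdot T$ is exactly right. Three small points need tightening. In (1), the equality $we=ew$ in the Renner monoid $R=\overline{N_G(T)}/T$ only gives $ne\in enT$, so you must still adjust the representative $n$ within its coset before concluding $ne=en$ in $M$ and hence $n\in N_{C_G(e)}(T)$. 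In (3), the kernel of $w\mapsto ew$ is $\bigl(N_{C_G(e)}(T)\cap G_eT\bigr)/T$, and $G_e$ may be disconnected while $W(M_e)=W(G_e^0)$; the identification you dismiss as a ``short manipulation'' is precisely the step where a representative must be moved into $G_e^0\cdot T$, and it should be written out. Finally, your complement $H$ contains the connected centre of $G_e^0$ (a normal torus of the connected group $C_G(e)$ is central, hence lies in $H$), so $H\cap G_e^0$ need not be finite; this does not damage the conclusion $W(C_G(e))=W(H)\times W(G_e^0)$, which depends only on how the simple factors of $C_G(e)$ are partitioned, but the ``finite central intersection'' claim as stated is false and should be dropped or weakened.
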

Let us now specialize to an idempotent from $\Lambda$. Then Lemma~\ref{L:WofeMe} 
shows that $W(C_G(e))= W(e)$ in our previous notation. 
Recall that the type-map of $\Lambda$ is equal to $\lambda = \lambda^* \sqcup \lambda_*$. 
In this notation, the restriction of $\lambda^*$ to $\Lambda(eMe)$ agrees with the 
$\lambda^*$ of the cross-section lattice $\Lambda(eMe)$ and the restriction of $\lambda_*$ to $\Lambda(M_e)$
agrees with the $\lambda_*$ of $\Lambda(M_e)$.

\section{Rank 1 Elements}\label{S:Rank1}

In this section, we analyze some local monoids in the enveloping semigroups. 
For easing our notation, we denote $\textrm{Env}(G_0)$ by $M$. 
\begin{Notation}
Let $\varSigma$ denote the Coxeter-Dynkin diagram of $(G,B,T)$. For every subset $I\subset S$, 
we have the corresponding subdiagram $\varSigma_I$ in $\varSigma$. 
By abusing notation, we will not distinguish between $I$ and $\varSigma_I$. In particular, 
the subsets of $I$ that correspond to the connected components of $\varSigma_I$
will be called the {\em connected components of $I$}. 
\end{Notation}

In~\cite[Section 6]{Renner}, by using the language that himself and Putcha developed, 
Renner reviews Vinberg's work on enveloping semigroups. 
In~\cite[Theorem 6.18]{Renner}, Renner describes
\begin{enumerate}
\item the cross-section lattice of $M$,
\begin{align}\label{A:descriptionofLambda}
\Lambda(M) = \{ e_{I,J}:\ I, J \subseteq S, \text{ and no component of $J$ is contained in $S\setminus I$}\};
\end{align}
\item the type-map of $M$, $\lambda (e_{I,J}) = \lambda_*(e_{I,J})\sqcup \lambda^*(e_{I,J})$, where 
\begin{align*}
\lambda_*(e_{I,J}) = J\ \text{ and } \ 
\lambda^*(e_{I,J}) = \{ s \in S \setminus I :\ \text{ $s s' =s' s$
for all $s' \in J$}\}.
\end{align*}
\end{enumerate}
In this notation, the natural partial order on the cross-section lattice is given by 
\begin{align}\label{A:essentialorder}
e_{I_1,J_1} \leq e_{I_2,J_2} \iff I_1 \supseteq I_2 \ \text{ and } J_1 \supseteq J_2,
\end{align}
where $I_i,J_i \subseteq S$ for $i\in \{1,2\}$.

\begin{Remark}
The empty set is not assumed to be a connected component of $J$, therefore, $\Lambda$ contains all idempotents $e$ 
of the form $e=e_{I,\emptyset}$, $I\subseteq S$. 
It follows from (\ref{A:essentialorder}) that the minimum and the maximum elements in $\Lambda$ 
are given by $e_{S,S}$ and $e_{\emptyset,\emptyset}$, respectively. 
It is easy to check that the height of $\Lambda$, as a graded poset, is equal to $2|S|$. 
\end{Remark}

\begin{Assumption}
In the rest of this section, we assume that the derived subgroup $(G_0,G_0)$ is a simple algebraic group.
In particular, the Dynkin diagram of $G_0$ is connected. 
\end{Assumption}

\begin{Lemma}\label{L:minimal corank1}
Let $\Lambda = \Lambda(M)$ denote the cross-section lattice of an enveloping monoid.
Then the smallest element of $\Lambda$ is given by $e_{S,S}$.
Furthermore, an element $e_{I,J}$ covers $e_{S,S}$ if and only if $|I|+ |J| = 2|S| -1$.
\end{Lemma}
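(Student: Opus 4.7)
The plan is to exploit the very explicit combinatorial description of $\Lambda(M)$ in (\ref{A:descriptionofLambda}) and the reversed-inclusion order in (\ref{A:essentialorder}), together with the fact that, because $(G_0, G_0)$ is simple, the Dynkin diagram of $G_0$ is connected, so $S$ itself is the unique connected component of $S$.

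For the first claim, I would first verify that $e_{S,S} \in \Lambda(M)$. The membership condition in (\ref{A:descriptionofLambda}) requires that no connected component of $J = S$ be contained in $S \setminus I = \emptyset$. Since $S$ has only one component, namely $S$ itself (using connectedness of the Dynkin diagram), and since $S \not\subseteq \emptyset$ (using $|S| \geq 1$), the idempotent $e_{S,S}$ lies in $\Lambda$. That it is the smallest element is then immediate from (\ref{A:essentialorder}): for any $e_{I,J}\in \Lambda$ we have $I \subseteq S$ and $J \subseteq S$, so $e_{S,S} \leq e_{I,J}$.

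For the second claim, I would argue via the natural grading $\rho(e_{I,J}):=2|S|-(|I|+|J|)$, which is consistent with (\ref{A:essentialorder}) and makes $\rho(e_{S,S})=0$. If $e_{I,J}$ covers $e_{S,S}$, then any element $e_{I',J'}$ with $e_{S,S} < e_{I',J'} < e_{I,J}$ would satisfy $I \subseteq I' \subseteq S$ and $J \subseteq J' \subseteq S$ with both inclusions of pairs strict; this forces $|I|+|J| < |I'|+|J'| < 2|S|$. So to rule out intermediate elements, it suffices to show $|I|+|J|=2|S|-1$. Conversely, for the direction showing existence of intermediate elements when $|I|+|J|\leq 2|S|-2$, the key point is the following constructive step: given $e_{I,J}\in\Lambda$ with $(I,J)\neq (S,S)$, one can always find $(I',J')$ with $(I,J) \subsetneq (I',J') \subsetneq (S,S)$ and $|I'|+|J'|=|I|+|J|+1$ with $e_{I',J'}\in\Lambda$. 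If $I \neq S$, choose $s \in S \setminus I$ and set $I':=I\cup\{s\}$, $J':=J$; the lattice condition is preserved because $S\setminus I' \subsetneq S\setminus I$, so no component of $J'=J$ can slip inside $S\setminus I'$ when it did not fit inside $S\setminus I$. If $I=S$ (so $J\neq S$), set $I':=S$, $J':=J\cup\{s\}$ for any $s\notin J$; the lattice condition is vacuous because $S\setminus I'=\emptyset$ and components of $J'$ are nonempty.

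Finally, I would verify existence: when $|I|+|J|=2|S|-1$, the two possibilities are $(S\setminus\{s\},S)$ and $(S,S\setminus\{s\})$ for some $s\in S$, and both lie in $\Lambda$. For the first, the only component of $S$ is $S$, and $S\not\subseteq\{s\}$ provided $|S|\geq 2$; for the second, the condition is vacuous as above. The edge case $|S|=1$ must be checked separately, but there the only atom is $e_{S,\emptyset}$, which still has $|I|+|J|=1=2|S|-1$. The main (minor) obstacle is keeping the two sides of the covering characterization straight, since the order on $\Lambda$ is \emph{reverse} inclusion on $(I,J)$; once this is fixed notationally, the argument reduces to the grading observation and the constructive step above.
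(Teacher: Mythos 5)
Your proof is correct and follows the same route as the paper, which simply asserts that both claims "follow from the descriptions of $\Lambda$ and its ordering" in (\ref{A:descriptionofLambda}) and (\ref{A:essentialorder}); you have merely filled in the routine details (membership of $e_{S,S}$, the grading, the one-step saturation argument, and the $|S|=1$ edge case) that the paper leaves implicit.
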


\begin{proof}
Both of these claims follow from the descriptions of $\Lambda$ and its ordering,
see (\ref{A:descriptionofLambda}) and (\ref{A:essentialorder}).
\end{proof}

Let $R$ denote the Renner monoid of $M$, and let $\Lambda$ denote the cross-section lattice of $M$. 
We define the subsets $\Lambda_1$ and $\Lambda^1$ in $\Lambda$ as follows:
\begin{align*}
\Lambda_1 := \{ e\in \Lambda :\ \text{rank}(e)=1 \}\qquad \text{and}\qquad 
\Lambda^1 := \{ e\in \Lambda :\ \text{corank}(e)=1 \}.
\end{align*}
It is easy to check that $\Lambda_1$ has $2|S|$ elements, and $\Lambda^1$ has $|S|$ elements.

\begin{Example}\label{E:Poset}

The partial order (\ref{A:essentialorder}) for $M=\textrm{Env}(\textrm{SL}_3)$ is depicted in Figure~\ref{F:1}. 
It gives the inclusion order between the closures of all $G\times G$-orbits in $M$, 
where $G= (\textrm{SL}_3\times T_0)/Z_{ \textrm{SL}_3}$.
Here, $T_0$ is the maximal torus in $ \textrm{SL}_3$, and $Z_{ \textrm{SL}_3}\cong \Z/3$.
\begin{figure}[htp]
\centering
\begin{tikzpicture}[scale=.35]
\begin{scope}
\node at (0,-10) (t1) {$e_{\{1,2\},\{1,2\}}$};
\node at (-7.5,-5) (t2) {$e_{\{1\},\{1,2\}}$};
\node at (-2.5,-5) (t3) {$e_{\{1,2\},\{1\}}$};
\node at (2.5,-5) (t4) {$e_{\{1,2\},\{2\}}$};
\node at (7.5,-5) (t5) {$e_{\{2\},\{1,2\}}$};
\node at (-5,0) (t6) {$e_{\{1\},\{1\}}$};
\node at (0,0) (t7) {$e_{\{1,2\},\emptyset}$};
\node at (5,0) (t8) {$e_{\{2\},\{2\}}$};
\node at (-2.5,5) (t9) {$e_{\{1\},\emptyset}$};
\node at (2.5,5) (t10) {$e_{\{2\},\emptyset}$};
\node at (0,10) (t11) {$e_{\emptyset,\emptyset}$};

\draw[-, thick] (t1) to (t2); 
\draw[-, thick] (t1) to (t3);
\draw[-, thick] (t1) to (t4);
\draw[-, thick] (t1) to (t5);
\draw[-, thick] (t2) to (t6);
\draw[-, thick] (t3) to (t6);
\draw[-, thick] (t3) to (t7);
\draw[-, thick] (t4) to (t7);
\draw[-, thick] (t4) to (t8);
\draw[-, thick] (t5) to (t8);
\draw[-, thick] (t6) to (t9);
\draw[-,thick] (t7) to (t9);
\draw[-,thick] (t7) to (t10);
\draw[-, thick] (t8) to (t10);
\draw[-, thick] (t9) to (t11);
\draw[-, thick] (t10) to (t11);
\end{scope}

\end{tikzpicture}
\caption{The cross-section lattice of $\textrm{Env}(\textrm{SL}_3)$.}
\label{F:1}
\end{figure}
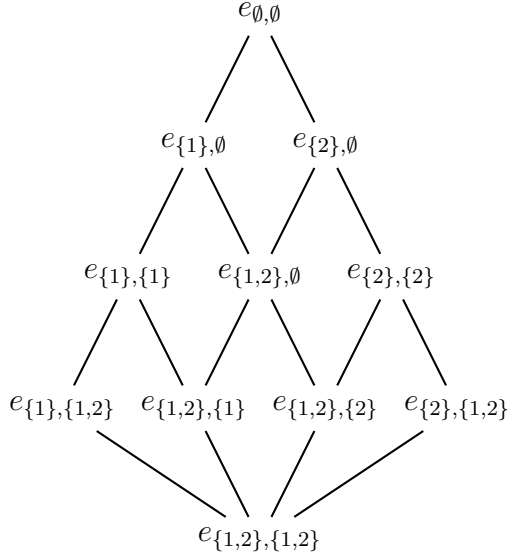

\end{Example}

\begin{Proposition}\label{P:W of eMe}
Let $e_{I,J}$ be an element of $\Lambda_1$. 
Then $W(eMe) = \{1 \}$.
\end{Proposition}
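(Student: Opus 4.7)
The plan is to combine Lemma~\ref{L:WofeMe} with the ``type-map'' observation in the paragraph following it: for any $e\in\Lambda$, one has $W(eMe)=W^*(e)=W_{\lambda^*(e)}$, since $C_W(e)=W(e)=W^*(e)\times W_*(e)$ matches the decomposition $C_W(e)=W(eMe)\times W(M_e)$, with $W_*(e)=W(M_e)$ being the Weyl group of the stabilizer $G_e^0$. Hence the whole proposition reduces to showing that $\lambda^*(e_{I,J})=\emptyset$ for every rank-one idempotent $e_{I,J}\in\Lambda_1$.

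Next, I would unpack which idempotents $e_{I,J}$ are rank one. By Lemma~\ref{L:minimal corank1}, $\Lambda_1$ consists exactly of the $e_{I,J}$ with $|I|+|J|=2|S|-1$, so either (a) $I=S$ with $|J|=|S|-1$, or (b) $J=S$ with $|I|=|S|-1$. In subcase (a), $S\setminus I=\emptyset$, so the defining formula
$$\lambda^*(e_{I,J}) = \{s \in S\setminus I : ss'=s's \text{ for all } s'\in J\}$$
is automatically empty. In subcase (b), write $S\setminus I=\{s_0\}$; then $\lambda^*(e_{I,J})$ contains $s_0$ if and only if $s_0$ commutes with every $s'\in S$. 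Because $(G_0,G_0)$ is simple by the standing assumption, and because case (b) can only be populated when $|S|\geq 2$ (for $|S|=1$ the pair $(I,J)=(\emptyset,\{s_0\})$ is ruled out of $\Lambda$ by the component condition, since the unique component $\{s_0\}$ of $J=S$ would sit inside $S\setminus I$), the connected Coxeter-Dynkin diagram $\varSigma$ forces $s_0$ to have at least one Dynkin neighbor $s'$, and so $s_0 s' \neq s' s_0$. Thus $\lambda^*(e_{I,J})=\emptyset$ in case (b) as well.

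Combining the two subcases yields $W(eMe)=W_\emptyset=\{1\}$. The argument is essentially a direct unwinding of Renner's explicit description of $\lambda^*$, so there is no serious obstacle; the only bookkeeping subtlety is the degenerate situation $|S|=1$, which is handled by the component-condition exclusion noted above.
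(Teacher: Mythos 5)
Your proof is correct and follows essentially the same route as the paper: both reduce the claim, via Lemma~\ref{L:WofeMe} and the decomposition $W(e)\cong W^*(e)\times W_*(e)$, to showing $\lambda^*(e_{I,J})=\emptyset$, which follows from $|I|+|J|=2|S|-1$ and the connectedness of the Dynkin diagram. Your explicit handling of the degenerate case $|S|=1$ (where the subcase $J=S$, $|I|=|S|-1$ is excluded from $\Lambda$ by the component condition) is a small point the paper's proof passes over silently, but it does not change the argument.
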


\begin{proof}
By Lemma~\ref{L:minimal corank1}, we have
$|I| + |J| = 2 |S| -1$. 
It follows that there is a unique $s_i\in S$ such that either $I = S\setminus \{s_i\}$ or $J=S\setminus \{s_i\}$ hold. 
Since the type map of $M$ is given by 
$\lambda (e_{I,J}) = \lambda_*(e_{I,J})\sqcup \lambda^*(e_{I,J})$, where 
\begin{align*}
\lambda_*(e_{I,J}) = \{ s_i :\ i\in J\} \ \text{ and } \ 
\lambda^*(e_{I,J}) = \{ s_{i} :\ s_i \in S \setminus I\ \text{ and $s_{i} s_{j} =s_{j} s_{i}$
for all $s_j \in J$}\},
\end{align*}
and since the Dynkin diagram of $G_0$ is connected, 
we see that $\lambda^* (e_{I,J})=\emptyset$ and $\lambda (e_{I,J}) = \lambda_*(e_{I,J})=J$. 
In particular, we have 
\begin{align}\label{A:forrank1}
W(e_{I,J}) \cong W_{\lambda^*(e_{I,J})} \times W_{\lambda_*(e_{I,J})} = \{1 \} \times W_{\lambda_*(e_{I,J})} \cong  W_{\lambda_*(e_{I,J})}.
\end{align}
Recall from Lemma~\ref{L:WofeMe} that $W( eMe ) = eC_W(e)$ for any idempotent $e$ in $M$.
In our case, this means that $C_W(e_{I,J}) = W(e_{I,J})$ and that $W(e_{I,J}Me_{I,J})  = e_{I,J}W(e_{I,J})$.
But (\ref{A:forrank1}) shows that $W(e_{I,J}) = W_*(e_{I,J})= \{ w\in W :\ w e_{I,J} = e_{I,J} w = e_{I,J} \}$.
Therefore, 
\[
W(e_{I,J}Me_{I,J}) = \{ e_{I,J} w:\ w\in W_*(e_{I,J}) \} = \{ e \}.
\]
Clearly, $\{e\}$ is the trivial group, hence, the proof is finished. 
\end{proof}

We proceed with an important corollary of Proposition~\ref{P:W of eMe}.
\begin{Corollary}\label{C2:W of eWe}
Let $e$ be an idempotent from $\Lambda_1$. 
If $e=e_{I,J}$ ($I,J\subseteq S$), then the local monoid $e M e$ is isomorphic to the affine line $\mathbb{A}^1$. 
Moreover, we have the isomorphism $k^* \cong C_G(e) / G_e^0$. 
\end{Corollary}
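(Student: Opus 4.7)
My plan is as follows. First, I would invoke Proposition~\ref{P:W of eMe} to conclude that the Weyl group of the connected reductive unit group $G(eMe) = eC_G(e)$ is trivial. Since a connected reductive group with trivial Weyl group is a torus, $G(eMe)$ is a torus, and therefore $eMe$ is a commutative reductive monoid (a D-monoid, i.e.\ an affine torus embedding).

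Next, I would identify $eMe$ via its cross-section lattice. From the discussion preceding Lemma~\ref{L:M_e}, we have $\Lambda(eMe) = \{f \in \Lambda : f \leq e\}$. Since $e \in \Lambda_1$ has rank $1$ and the minimum $0 = e_{S,S}$ of $\Lambda$ has rank $0$, this yields the two-element chain $\Lambda(eMe) = \{0, e\}$. Under the standard classification of affine torus embeddings by rational polyhedral cones, the idempotents in the closed maximal torus $\overline{eT}$ correspond bijectively to faces of the associated cone; having only two idempotents forces the cone to have only two faces, namely itself and the origin, so it is a one-dimensional ray. Consequently $eMe \cong \mathbb{A}^1$ and, in particular, $G(eMe) \cong k^*$.

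For the second assertion, I would exploit the exact sequence
\[
1 \to G_e \to C_G(e) \to G(eMe) \to 1
\]
from Lemma~\ref{L:M_e}, which combined with the first part gives $C_G(e)/G_e \cong k^*$. Since $C_G(e)$ is connected (being a Levi subgroup, as noted in the preliminaries) and $G_e^0$ is connected, the quotient $C_G(e)/G_e^0$ is a connected algebraic group of dimension $\dim C_G(e) - \dim G_e^0 = \dim C_G(e) - \dim G_e = \dim G(eMe) = 1$. It surjects onto $C_G(e)/G_e \cong k^*$, and a connected one-dimensional algebraic group is either the additive group $\mathbb{G}_a$ or $k^*$; since $\mathbb{G}_a$ admits no nontrivial homomorphism to $k^*$, we must have $C_G(e)/G_e^0 \cong k^*$.

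The main technical step is the second one: rigorously identifying a D-monoid whose cross-section lattice is a two-element chain with $\mathbb{A}^1$. The toric cone classification is the cleanest route; alternatively one could argue directly that $eMe = G(eMe) \sqcup \{0\}$ as a disjoint union of $G(eMe)\times G(eMe)$-orbits, which forces $\dim G(eMe)=1$ because the closure of a torus of larger dimension would contribute additional toric orbits beyond the unit group and the zero.
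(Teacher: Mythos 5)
Your argument is correct, and it diverges from the paper's proof in the one step that actually carries the weight. Both you and the paper begin identically: Proposition~\ref{P:W of eMe} gives that $G(eMe)=eC_G(e)$ has trivial Weyl group, hence is a torus, and $\Lambda(eMe)=\{0,e\}$. To pin down the dimension of that torus, however, the paper observes that $eMe$ is J-irreducible, invokes the fact that J-irreducible monoids are semisimple and therefore have one-dimensional center (citing \cite[Lemma 8.3.2]{Renner85}), and concludes that a unit group which is simultaneously a torus and has one-dimensional center must be $k^*$. You instead read the dimension off the combinatorics: a torus embedding whose idempotent lattice is a two-element chain (equivalently, whose only orbits are the open one and the zero) must come from a one-dimensional torus, by the orbit--face correspondence or by the orbit-counting argument you sketch at the end. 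Both routes are valid; yours is more self-contained and elementary, while the paper's leans on an external structural result about semisimple monoids. (Both share the same implicit final step, namely that a one-dimensional unit-dense monoid with unit group $k^*$ and a zero is $\mathbb{A}^1$ rather than a non-normal curve; this rests on normality and is no more of an issue for you than for the paper.) For the second assertion the paper simply says it ``follows from Lemma~\ref{L:M_e},'' which literally yields $C_G(e)/G_e\cong k^*$; your additional argument passing from $G_e$ to $G_e^0$ via connectedness of $C_G(e)$ and the nonexistence of nontrivial homomorphisms $\mathbb{G}_a\to k^*$ is a genuine improvement in rigor, since $G_e$ need not be connected and the paper elides this point.
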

\begin{proof}
The cross-section lattice of $eMe$ is given by $\Lambda( eMe) = \{ 0, e_{I,J} \}$. 
In other words, $eMe$ is a J-irreducible monoid. It follows from Proposition~\ref{P:W of eMe} that 
the unit group of $eMe$ is a torus. But J-irreducible monoids are semi-simple, hence their centers are one dimensional,
see~\cite[Lemma 8.3.2]{Renner85}. It follows that the unit group of $eMe$ is isomorphic to $k^*$, hence, 
$eMe= \mathbb{A}^1$.

The second claim follows from Lemma~\ref{L:M_e}.
\end{proof}

In the next proposition, $D_J^{op}$ stands for the opposite of the Bruhat-Chevalley order on the minimal coset representatives
$D_J$ of $W_J$ in $W$. 

\begin{Proposition}\label{P:W of eWe}
Let $e$ be an idempotent from $\Lambda_1$. 
If $e=e_{I,J}$ ($I,J\subseteq S$), then we have
\begin{align*}
WeW \cong  
\begin{cases}
\{ 1\} & \text{ if } J=S; \\
D_J\times D_J^{op} & \text{ if } J= S\setminus \{s\}.
\end{cases}
\end{align*}
In particular, the dimension of the orbit $GeG$ is given by 
\begin{align*}
\dim {GeG} = 
\begin{cases}
1 & \text{ if } J=S; \\
2\frac{|W|}{ |W_J|} +1  & \text{ if } J= S\setminus \{s\}.
\end{cases}
\end{align*}
\end{Proposition}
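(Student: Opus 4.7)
My approach is to split according to the two possible shapes of $e=e_{I,J}\in\Lambda_1$ identified by Lemma~\ref{L:minimal corank1}, namely $J=S$ (which forces $I=S\setminus\{s\}$) or $I=S$ (which forces $J=S\setminus\{s\}$), and in each case determine $WeW$ via the Pennell-Putcha-Renner standard form together with the dimension of $GeG$ via an orbit-stabilizer computation.

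In the first case $J=S$, using the explicit description of the type map from (\ref{A:descriptionofLambda}) and the connectedness of the Dynkin diagram (which forces $\lambda^{*}(e)=\emptyset$), one computes $\lambda(e)=\lambda_{*}(e)=S$, hence $W(e)=W_{*}(e)=W$ and $D(e)=D_{*}(e)=\{1\}$. The standard form $x=aeb^{-1}$ therefore collapses to $x=e$, so $|WeW|=1$. For the dimension, the equality $C_G(e)=C_G^{r}(e)=C_G^{l}(e)=G$ means $e$ commutes with every element of $G$, so $GeG=Ge\cong G/G_e$; combining Lemma~\ref{L:M_e} with the fact that $G(eMe)\cong k^{*}$ (the analogue of Corollary~\ref{C2:W of eWe} for this case) gives $\dim GeG=\dim G(eMe)=1$.

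In the second case $J=S\setminus\{s\}$, one similarly gets $\lambda(e)=\lambda_{*}(e)=J$, so $W(e)=W_{*}(e)=W_J$ and $D(e)=D_{*}(e)=D_J$. The standard form then puts $WeW$ in bijection with $D_J\times D_J$. To upgrade to the poset isomorphism $WeW\cong D_J\times D_J^{\mathrm{op}}$, I would apply the Bruhat-Chevalley-Renner order formula (\ref{A:BCR}): $aeb^{-1}\leq ced^{-1}$ iff there exists $w\in W_J$ with $a\leq cw$ and $w^{-1}d^{-1}\leq b^{-1}$ (the latter equivalent to $dw\leq b$). The lifting property for minimal coset representatives of $W/W_J$ yields $a\leq cw\iff a\leq c$, and the inequality $d\leq dw$ (which holds because $d$ is minimal in its coset) forces $dw\leq b$ to imply $d\leq b$; the converse directions follow by taking $w=1$. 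Hence $aeb^{-1}\leq ced^{-1}$ iff $a\leq c$ and $d\leq b$, which gives the claimed product order with opposite structure on the second factor.

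For the dimension, my plan is to invoke the Renner-type formula $\dim BxB=\ell(a)+\ell(b)+\dim BeB$ for each standard form $x=aeb^{-1}$ and to maximize over $(a,b)\in D_J\times D_J$. The dimension $\dim BeB$ I would obtain by realizing $\mathrm{Stab}_{B\times B}(e)$ as a fiber product over $k^{*}=G(eMe)$, using the character $\chi\colon P_J\to k^{*}$ arising from the embedding $eMe\hookrightarrow\mathbb{A}^{1}$ of Corollary~\ref{C2:W of eWe}; this uses the Renner-Putcha identification $C_G^{r}(e)=P_J$ and $C_G^{l}(e)=P_J^{-}$ together with $B\cap C_G^{l}(e)=B_{L_J}$. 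The main obstacle will be pinning down $\dim BeB$ precisely and combining with the maximum of $\ell(a)+\ell(b)$ to match the claimed $2|W|/|W_J|+1$; careful bookkeeping with the parabolic centralizers determined by the type map, and with the lengths of the longest elements of $D_J$ and of $W_J$, will be required.
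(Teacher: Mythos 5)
Your treatment of the first claim is correct and is essentially the paper's own argument: compute $\lambda(e)=\lambda_*(e)$ from the type map, conclude $D_*(e)=D(e)=\{1\}$ or $D_J$, and read off the order from (\ref{A:BCR}); your lifting-property argument for ``$\exists w\in W_J$ with $a\leq cw$ and $dw\leq b$ iff $a\leq c$ and $d\leq b$'' correctly fills in the step the paper labels ``not difficult to show.'' Your dimension argument in the case $J=S$ (central idempotent, $GeG=Ge\cong G/G_e$, Lemma~\ref{L:M_e}) is a valid alternative to the paper's.

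The genuine gap is the dimension count in the case $J=S\setminus\{s\}$, which you explicitly leave open, and it cannot be closed in the form you set up. Two problems. First, your proposed formula $\dim BxB=\ell_W(a)+\ell_W(b)+\dim BeB$ has the wrong sign on $\ell_W(b)$: by the order you just established, increasing $b$ moves $aeb^{-1}$ \emph{down}, so the minimal $B\times B$-orbit in $WeW$ is $e(w_0^J)^{-1}$ (with $w_0^J$ the longest element of $D_J$), not $e$, and the correct rank function is $\ell_W(a)-\ell_W(b)+\mathrm{const}$. Second, and more seriously, the target $2|W|/|W_J|+1$ is not what any correct computation yields, so no amount of bookkeeping will reach it. The paper's own route is the clean one: since the $B\times B$-orbit poset of $\overline{GeG}=\{0\}\sqcup\bigsqcup_{\sigma\in WeW}B\sigma B$ is graded by dimension, $\dim GeG$ equals the height of $\{0\}\sqcup\bigl(D_J\times D_J^{op}\bigr)$, namely $1+2\,\mathrm{ht}(D_J)=2\bigl(\ell_W(w_0)-\ell_W(w_{0,J})\bigr)+1=2\dim G/P_{\lambda(e)}+1$, where $w_0$ and $w_{0,J}$ are the longest elements of $W$ and $W_J$. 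This agrees with your fibration (\ref{A:CR}) approach ($\dim GeG=2\dim G/P+\dim eC_G(e)$ with $\dim eC_G(e)=1$ by Corollary~\ref{C2:W of eWe}), but it differs from $2|W|/|W_J|+1$ in general: for $\textrm{Env}(\textrm{SL}_2)\cong\textrm{Mat}_2$ the rank-one orbit is $3$-dimensional, whereas $2|W|/|W_\emptyset|+1=5$ exceeds $\dim M=4$. So treat the displayed dimension formula as a misprint for $2\bigl(\ell_W(w_0)-\ell_W(w_{0,J})\bigr)+1$ and prove that instead.
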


\begin{proof}

Since $e$ is a minimal idempotent in $\Lambda \setminus \{ 0 \}$, 
we have $\overline{GeG} = \{ 0 \} \cup \bigsqcup_{\sigma \in WeW } B \sigma B$.
For any $f\in \Lambda$, the set of $B\times B$-orbit closures in $\overline{GfG}$ 
form a graded poset ranked by the dimension function.
Therefore, if we prove our first claim, our second claim will follow at once by computing the length
of the maximal element in the Bruhat poset.

To this end, we will use the standard form of the elements in $WeW$. 
It is easy to verify from (\ref{A:forrank1}) that 
\begin{align}\label{A:forrank1'}
D_*(e)= D(e) = 
\begin{cases}
\{ id \} & \text{ if } J=S; \\
D_J & \text{ if } J= S\setminus \{s\}.
\end{cases}
\end{align}
Therefore, in the first case, we have $WeW = \{e\}$. In the second case, we have 
$WeW = \{ w e w' :\ w\in D_J \}$. 
It is not difficult to show, by using (\ref{A:BCR}), that 
if $\sigma = w_1 e w_1', \tau = w_2 e w_2'$ are two elements from $WeW$, then 
\[
\sigma \leq \tau \iff w_1 \leq w_2 \ \text{ and } \ w_1' \geq w_2'.
\]
Therefore, the inclusion posets of $B\times B$-orbit closures in $WeW$ is isomorphic to 
$D_J \times D_J^{op}$ as claimed. 
This finishes the proof.
\end{proof}

\begin{Remark}
Note that Proposition~\ref{P:W of eWe} implies that there are exactly $|S|$ $G\times G$-stable curves in $M$ and 
they all contain 0. Of course, these curves are $B\times B$-stable as well. 
Going through this vein, we see that the total number of $B\times B$-stable curves in $M$ is given by $2|S|$. 
More generally, let $R$ denote the Renner monoid of $M$, and let $R_1$ denote its subposet $\bigsqcup_{e\in \Lambda_1} WeW$. 
The following formula is easily proven by using Proposition~\ref{P:W of eWe}:
\[
| R_1 | = |S| + \sum_{s\in S} \left(\frac{ |W| }{|W_{S\setminus \{s\}} |}\right)^2.
\]
Here, the first term, $|S|$, is equal to the number of the posets $We_{I,J}W$ with $|J| +1 = |I| = |S|$, and 
the summation gives the total number 
of elements in all posets of the form $We_{I,J} W$ with $J=S$ and $| I | = |S|-1$. 
Each of these subposets has a unique minimal element corresponding to a $B\times B$-stable curve. 
\end{Remark}

\vspace{.25cm}

Next, we determine the groups $G_e^0$ and $C_G(e)$ for $e\in \Lambda_1$. 

\begin{Corollary}\label{C3:W of eWe}
Let $e$ be an idempotent from $\Lambda_1$. If $e=e_{I,J}$ 
with $I,J\subseteq S$, then the following statements hold true:
\begin{enumerate}
\item if $J=S$, then $C_G(e) \cong G_0\times T_0'$, where $T_0'$ is a codimension one subtorus in $T_0$;
\item if $I=S$, then $C_G(e) \cong L \times T_0$, where $L$ is a Levi subgroup of a maximal parabolic subgroup of $G_0$.
\end{enumerate}
Furthermore, in the former case, we have $G_e^0 \cong G_0 \times T_0''$, where $T_0''$ is a codimension one subtorus in $T_0'$,
and in the latter case, we have $G_e^0 \cong L\times T_0'$, where $T_0'$ is a codimension one subtorus in $T_0$. 
\end{Corollary}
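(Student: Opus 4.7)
The plan is to leverage the Weyl-group identities of Lemma~\ref{L:WofeMe}, Proposition~\ref{P:W of eMe}, and Corollary~\ref{C2:W of eWe} together with the exact sequence of Lemma~\ref{L:M_e}, reducing the assertion to a determination of derived subgroups and central tori.

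I would first pin down the type map at $e$. Because $|I|+|J|=2|S|-1$ and the Dynkin diagram of $G_0$ is connected, no simple reflection in $S\setminus I$ commutes with every element of $J$, so $\lambda^*(e)=\emptyset$ and $\lambda(e)=J$. Hence $W(e)=W_J$, which equals $W$ in Case~(1) and $W_{S\setminus\{s\}}$ in Case~(2). Combining the identity $C_W(e)=W(eMe)\times W(M_e)$ from Lemma~\ref{L:WofeMe}(iii) with Proposition~\ref{P:W of eMe}, I conclude $W(M_e)=W_J$. Since the Weyl group of the reductive group $G_e^0=G(M_e)$ equals $W_J$, its derived subgroup is forced to be $G_0$ in Case~(1) and a Levi $L$ of a maximal parabolic of $G_0$ in Case~(2). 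The same Weyl-group identity applied to $C_G(e)$ shows that $C_G(e)$ shares this derived subgroup with $G_e^0$, and both their connected centers are subtori of the connected center $T_0$ of $G$.

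Next, I would apply the exact sequence $1\to G_e^0\to C_G(e)\to G(eMe)\to 1$ of Lemma~\ref{L:M_e}, together with $G(eMe)\cong k^*$ from Corollary~\ref{C2:W of eWe}, to obtain $\dim C_G(e)=\dim G_e^0+1$. Since the derived parts agree, this single dimension difference lies in the central tori. In Case~(2), no constraint coming from the type map restricts the center of $C_G(e)$ (as $\lambda_*(e)=J$ controls only the semisimple part), so $C_G(e)\cong L\times T_0$, and the exact sequence forces $G_e^0\cong L\times T_0'$ with $T_0'$ of codimension one in $T_0$. In Case~(1), the derived part of $C_G(e)$ is all of $G_0$, and the type-map constraint kills one dimension of the central torus, giving $C_G(e)\cong G_0\times T_0'$ with $T_0'$ codim one in $T_0$; feeding this back into the exact sequence yields $G_e^0\cong G_0\times T_0''$ with $T_0''$ codim one in $T_0'$.

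The main obstacle is identifying the precise codim-one subtorus $T_0'\subset T_0$ in each case. The cleanest route is through the convergent one-parameter subgroup $\lambda_I:k^*\to A_M$ of the introduction whose limit realizes $e$: the kernel of the character of $T_0$ dual to $\lambda_I$, computed after transport through the abelianization $\pi_M:M\to A_M\cong\mathbb{A}^{\dim T_0}$, is the required $T_0'$. Once the dictionary between the combinatorial label $(I,J)$ and the cocharacter $\lambda_I$ is made explicit, the asserted central-torus inclusions reduce to a direct combinatorial check.
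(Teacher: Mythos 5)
Your overall route is the one the paper takes: compute $\lambda(e)=\lambda_*(e)=J$ and $\lambda^*(e)=\emptyset$ from connectedness of the Dynkin diagram, deduce $W(C_G(e))=C_W(e)=W_J$ and, via $C_W(e)=W(eMe)\times W(M_e)$ together with $W(eMe)=\{1\}$, that $W(G_e^0)=W_J$ as well; conclude that both groups have derived subgroup $G_0$ (resp.\ a Levi $L$ of a maximal parabolic of $G_0$); and finally use the exact sequence $1\to G_e\to C_G(e)\to G(eMe)\to 1$ with $G(eMe)\cong k^*$ to account for the one-dimensional drop in the central torus when passing from $C_G(e)$ to $G_e^0$. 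That is exactly the paper's argument, so no credit is lost on strategy.

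The genuine soft spot is the step that carries the actual content of items (1) and (2): the dimension of the connected centre of $C_G(e)$ itself. You assert it by a heuristic --- ``no constraint \dots restricts the center'' in case (2), ``the type-map constraint kills one dimension of the central torus'' in case (1) --- but the type map only constrains Weyl groups, hence semisimple parts, and gives no mechanism for counting central dimensions. The rigorous input is that $C_G(e)=C_G^r(e)\cap C_G^l(e)$ is the Levi subgroup of $G$ containing the maximal torus $T$ whose Weyl group is $W_{\lambda(e)}$; hence its maximal torus is $T$, of dimension $2|S|$, and its connected centre has dimension $2|S|-|\lambda(e)|$. This yields $|S|+1$ in case (2), consistent with $L\times T_0$, but it yields $|S|$ in case (1) (there $\lambda(e)=S$, so $C_G(e)$ is the full Levi), which does not match the codimension-one count your heuristic produces; as written, your argument has no way to adjudicate this discrepancy, so the case (1) claim is unproved. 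Separately, the ``main obstacle'' you defer to the final paragraph --- identifying the specific subtorus $T_0'$ through the cocharacter $\lambda_I$ and the abelianization --- is unnecessary: the statement only asserts the existence of a codimension-one subtorus, and the rank count above (plus the exact sequence) already supplies that once the central-torus step is made rigorous.
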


\begin{proof}
By the proof of Proposition~\ref{P:W of eMe} we know that  
for $I=S$, $W(C_G(e))$ is the maximal parabolic subgroup $W_J$ in $W$, and for $J=S$, 
it is isomorphic to $W$. In other words, if $I=S$, then $C_G(e)$ contains a copy of the Levi subgroup of $G_0$ 
that is determined by $J$, and if $J=S$, then $C_G(e)$ contains a copy of $G_0$. 
Now, our first two claims follow from these facts and Corollary~\ref{C2:W of eWe}.

For our second claim, we argue in a similar way; recall from the proof of Proposition~\ref{P:W of eMe} that 
the Weyl groups of the reductive groups $C_G(e)$ and $G_e^0$ are isomorphic. 
In other words, the structures of the derived subgroups of both of these groups are isomorphic. 
But since $G_e^0$ is a (normal) subgroup of $C_G(e)$ and the quotient is a one dimensional torus, the proof is finished. 
\end{proof}

\vspace{.25cm}

We finish this section with a remark on the automorphisms of $\Lambda$. 
\begin{Remark}
It is well-known that the only irreducible reduced root systems with nontrivial automorphisms are of types $\text{A}_n$, $\text{D}_n$, and $\text{E}_6$.
The automorphism groups of these root system are given by 
\begin{enumerate}
\item $\text{Aut}(\text{A}_n) = \Z/2$; 
\item $\text{Aut}(\text{D}_n) = \Z/2$ for $n > 4$; 
\item $\text{Aut}(\text{D}_4) = S_3$;
\item $\text{Aut}(\text{E}_6) = \Z/2$.
\end{enumerate} 
Any nontrivial automorphism of these root systems preserves the connected components as well as the containment relations between 
connected components of the sub-Coxeter-Dynkin diagrams. 
Thus, it gives a nontrivial automorphism of the cross-section lattice. 
\end{Remark}

\section{The J-coirreducible Slices of $\textrm{Env}(G_0)$}\label{S:Jcoirreducible}

The purpose of this section is to determine the idempotents in $M=\textrm{Env}(G_0)$ for which the corresponding 
connected stabilizer monoid $M_e$ is J-coirreducible. 
We maintain our notation from Section~\ref{S:Rank1}. 
In particular, we assume that $G_0$ is a simple algebraic group.

Let $e= e_{I,J}$ be an element from $\Lambda$.
In Section~\ref{S:Rank1} we observed that if $J=S$ and $e\in \Lambda_1$, then $eMe$ is one dimensional,
and the unit group of $M_e$ is isomorphic to $G_0\times T_0'$, where $T_0'$ is a codimension one subtorus in $T_0$. 
We will present a generalization of this observation. First, we have a simple remark. 

\begin{Remark}\label{R:maximalelement}
Let $e_{I,J}$ be an idempotent from $\Lambda$. 
We claim that if $I=\emptyset$, then $e_{I,J}$ is the maximal element in $\Lambda$.
We already noted in Section~\ref{S:Rank1} that $e_{\emptyset,\emptyset}$ is the maximal element of $\Lambda$. 
By definition, $e_{I,J}$ is an element of $\Lambda$ if and only if no connected component of $J$ is entirely contained in $S\setminus I$. 
Therefore, if $I=\emptyset$, then there is no proper subset $J\subset S$ such that $e_{I,J} \in \Lambda$. This finishes the proof of our claim. 
\end{Remark}

\begin{Lemma}\label{L:prep1}
If the set $I$ is a singleton, then the connected stabilizer corresponding to an idempotent of the form $e=e_{I,J}$
($J\subseteq S$) is a J-coirreducible monoid. 
\end{Lemma}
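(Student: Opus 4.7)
The plan is to compute the cross-section lattice $\Lambda(M_e)$ explicitly and verify that its top element covers exactly one other element, which is the defining property of a J-coirreducible monoid. By (\ref{A:CSofM_e}), $\Lambda(M_e) = \{f \in \Lambda(M) : f \geq e\}$, so the task reduces to a combinatorial analysis inside the lattice described by (\ref{A:descriptionofLambda}) and (\ref{A:essentialorder}).

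First I would write $e = e_{\{s\},J}$ with $s \in S$. Admissibility in (\ref{A:descriptionofLambda}) forces $J$ to be either empty or a connected subset of $S$ containing $s$, because any connected component of $J$ avoiding $s$ would lie in $S\setminus\{s\} = S\setminus I$. By (\ref{A:essentialorder}), an element $e_{I',J'} \in \Lambda(M)$ satisfies $e_{I',J'} \geq e$ if and only if $I' \subseteq \{s\}$ and $J' \subseteq J$. The case $I' = \emptyset$ is immediately handled by Remark~\ref{R:maximalelement}, which forces $J' = \emptyset$ and yields only the top element $e_{\emptyset,\emptyset} = 1_G$ of $\Lambda$. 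In the remaining case $I' = \{s\}$, the candidates are precisely the elements $e_{\{s\},J'}$ with $J' \subseteq J$ satisfying the admissibility constraint that no connected component of $J'$ is contained in $S \setminus \{s\}$.

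Next I would single out the element $e_{\{s\},\emptyset}$, which satisfies the admissibility constraint vacuously and therefore belongs to $\Lambda(M_e)$. For every other element $e_{\{s\},J'}$ of $\Lambda(M_e)$ with $J' \neq \emptyset$, the order relation (\ref{A:essentialorder}) gives
\[
e_{\{s\},J'} \;\leq\; e_{\{s\},\emptyset} \;<\; e_{\emptyset,\emptyset},
\]
so $e_{\{s\},J'}$ cannot be covered by the top of $\Lambda(M_e)$. Consequently $e_{\{s\},\emptyset}$ is the unique coatom of $\Lambda(M_e)$, which by definition says that $M_e$ is J-coirreducible.

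I do not anticipate a genuine obstacle here: the whole argument is a short combinatorial deduction from the explicit description of $\Lambda(M)$. The only minor subtlety is remembering to invoke Remark~\ref{R:maximalelement} to rule out extraneous candidates with $I' = \emptyset$; once that is done, the hypothesis that $I = \{s\}$ is a singleton leaves no room in the lattice above $e$ other than the single chain passing through $e_{\{s\},\emptyset}$, which is exactly what forces the coatom to be unique.
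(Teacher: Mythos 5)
Your proof is correct and follows essentially the same route as the paper: both arguments reduce to observing that $e_{\{s\},\emptyset}$ is the unique corank-one element of $\Lambda$ lying above $e_{\{s\},J}$, so the interval $[e,e_{\emptyset,\emptyset}]=\Lambda(M_e)$ has a unique coatom. Your version just spells out the enumeration of the interval a bit more explicitly than the paper does.
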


\begin{proof}
Let $e_{I,J}$ be an idempotent from $\Lambda$. By Remark~\ref{R:maximalelement}, 
if $I=\emptyset$, then $e_{I,J}$ is the maximal element in $\Lambda$, so, we assume that $I\neq \emptyset$. 
Next, we observe the simple fact that, for every $I,J\subset S$, we have $e_{I,J} \leq e_{I,\emptyset}$. In particular, for $I=\{s\}$,
we have $e_{I,J} \leq e_{I,\emptyset}$. But $e_{I,\emptyset}$ is the unique element from $\Lambda^1$ that has this property. 
In other words, for any $J\subset S$, the maximum element $e_{\emptyset, \emptyset}$ covers a unique element 
of the upper interval $[e_{\{s\},J}, e_{\emptyset, \emptyset}]$. This finishes the proof.
\end{proof}

\begin{Remark}\label{R:descriptions}
The proof of the following fact follows from the descriptions of the cross-section lattices of the connected stabilizer monoids,
see~(\ref{A:CSofM_e}): 
If $e_{I,J} \leq e_{I',J'}$ are two idempotents from $\Lambda$ such that $M_{e_{I,J}}$ is J-coirreducible, then so is $M_{e_{I',J'}}$.
\end{Remark}

In the proof of the converse of Lemma~\ref{L:prep1},  
we will need the following notion.
\begin{Definition}
Let $K$ be a subset in $S$ viewed as a subdiagram in the Coxeter-Dynkin diagram of $S$.
An {\em end-node} in $K$ is an element $u\in K$ such that there exists $s\in S\setminus K$ with $su\neq us$.
\end{Definition}

\begin{Lemma}\label{L:prep2}
Let $e=e_{I,J}$ be an element from $\Lambda$. 
If $M_e$ ($e\in \Lambda$) is a J-coirreducible connected stabilizer in $M$,
then $I= \{s\}$ for some $s\in S$. 
\end{Lemma}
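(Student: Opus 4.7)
The plan is to convert J-coirreducibility of $M_e$ into a counting condition on the coatoms of its cross-section lattice, and then to read off the constraint on $I$ directly from the descriptions~(\ref{A:descriptionofLambda}) and~(\ref{A:essentialorder}).

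Recall from Section~\ref{S:Preliminaries} that a reductive monoid with zero is J-coirreducible precisely when its cross-section lattice has a unique element of corank $1$. By~(\ref{A:CSofM_e}), $\Lambda(M_e) = \{f \in \Lambda : f \geq e\}$ is the principal upper interval above $e$ in the graded lattice $\Lambda$, and by Remark~\ref{R:maximalelement} its top element is $e_{\emptyset,\emptyset}$. Since $\Lambda(M_e)$ is a convex subposet of $\Lambda$ sharing its top, the coatoms of $\Lambda(M_e)$ are precisely those coatoms of $\Lambda$ that lie above $e$.

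Next I would enumerate the coatoms of $\Lambda$. Because $\Lambda$ is graded of height $2|S|$ with $\mathrm{rank}(e_{I',J'}) = 2|S|-|I'|-|J'|$, the coatoms are the admissible $e_{I',J'}$ with $|I'|+|J'|=1$. The pair $(I',J')=(\emptyset,\{s\})$ is excluded by the admissibility condition in~(\ref{A:descriptionofLambda}), since the component $\{s\}$ of $J'$ would sit inside $S\setminus I'=S$. Hence the coatoms of $\Lambda$ are exactly $\{e_{\{s\},\emptyset} : s \in S\}$. Writing $e=e_{I,J}$, the ordering~(\ref{A:essentialorder}) then gives $e \leq e_{\{s\},\emptyset}$ iff $s \in I$ (the requirement $J \supseteq \emptyset$ being automatic), so the coatoms of $\Lambda(M_e)$ are exactly $\{e_{\{s\},\emptyset} : s \in I\}$, of which there are $|I|$.

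To finish, I would dispose of the degenerate case $I=\emptyset$: there $e = e_{\emptyset,\emptyset}$ is the identity of $M$, the monoid $M_e$ reduces to a point, and its cross-section lattice has no corank-$1$ element, contradicting J-coirreducibility. In the remaining case the hypothesis forces $|I|=1$, yielding $I=\{s\}$ for some $s\in S$, as claimed. The only delicate step in the argument is the passage from ``corank-$1$ element of $\Lambda(M_e)$'' to ``coatom of $\Lambda$ above $e$''; this rests on convexity of the upper interval and on $\Lambda$ and $\Lambda(M_e)$ sharing the same top, so cover relations out of $e_{\emptyset,\emptyset}$ are inherited verbatim. Everything else is bookkeeping in the Boolean description of $\Lambda$.
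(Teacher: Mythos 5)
Your proof is correct, and it takes a genuinely different route from the paper's. The paper argues by contradiction and by reduction: assuming $|I|\geq 2$, it first handles the case $J=\emptyset$ (two coatoms $e_{\{u\},\emptyset}, e_{\{v\},\emptyset}$ above $e$), and then spends most of the proof on an end-node analysis of the connected components of $J$ to show that one can always delete a node of $J$ while remaining in $\Lambda$, so that Remark~\ref{R:descriptions} lets it strip $J$ down to $\emptyset$. You bypass all of that: you enumerate the coatoms of $\Lambda$ once and for all as $\{e_{\{s\},\emptyset}: s\in S\}$ (the pairs $(\emptyset,\{s\})$ being inadmissible by (\ref{A:descriptionofLambda})), note via (\ref{A:CSofM_e}) that the coatoms of the upper interval $\Lambda(M_e)$ are exactly the coatoms of $\Lambda$ above $e$, and read off from (\ref{A:essentialorder}) that $e_{I,J}\leq e_{\{s\},\emptyset}$ iff $s\in I$ --- so $\Lambda(M_e)$ has exactly $|I|$ coatoms, independently of $J$. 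This makes the entire $J$-stripping argument unnecessary (indeed, even within the paper's scheme one could jump directly to $e_{I,\emptyset}\geq e_{I,J}$, which always lies in $\Lambda$). Your count also yields Lemma~\ref{L:prep1} simultaneously, since $|I|=1$ gives a unique coatom; the paper's route, on the other hand, produces along the way the combinatorial fact about removable end-nodes of $J$, which is of some independent use. The only steps worth flagging are the ones you already flag: the passage from ``corank-$1$ element of $\Lambda(M_e)$'' to ``coatom of $\Lambda$ above $e$'' is justified because $\Lambda(M_e)$ is an upper set in $\Lambda$ (so cover relations are inherited) sharing the top $e_{\emptyset,\emptyset}$, and the degenerate case $I=\emptyset$ forces $e=e_{\emptyset,\emptyset}$ by Remark~\ref{R:maximalelement}, where $M_e$ is trivially not J-coirreducible.
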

\begin{proof}
Let $e_{I,J}$ be as in the hypothesis. 
We now assume towards a contradiction that $I$ contains at least two different elements, $u,v\in I$ with $u\neq v$.
In this case,  if $J$ is the empty set, then $e_{\{u\},\emptyset} \geq e$ and $e_{\{v\},\emptyset} \geq e$;
this contradicts with our assumption on the ``J-coirreducibleness'' of $M_e$. 
With this observation and Remark~\ref{R:descriptions}, to finish the proof, it suffices to show that if $J$ is nonempty, 
then there is an element $u\in J$ such that $e_{I,J\setminus \{u\}} \in \Lambda$.

On one hand, if $J\cap I = \emptyset$, then we must have $J=\emptyset$. Therefore, we assume that $J\cap I \neq \emptyset$. 
On the other one hand, if $J$ is contained in $I$, then for every $u\in J$, we have the $e_{I,J\setminus \{u\}} \in \Lambda$.
Therefore, we assume that $J$ is not entirely contained in $I$. 
If $u$ is an end-node of $J$ such that $u\in J\cap (S\setminus I)$, then clearly no connected component of $J\setminus \{u\}$ 
is entirely contained in $S\setminus I$, hence, $e_{I, J\setminus \{u\}}\in \Lambda$. 
We now assume that all end-nodes of $J$ are contained in $I$. 
In this case, if $u\in J$ is an isolated connected component of $J$, then we still have $e_{I,J\setminus \{u\}} \in \Lambda$.
Therefore, we assume that if $K$ is a connected component of $J$, then all end-nodes of $K$ are contained in $I$. 
Let $u\in K$ be an end-node. Then since other end-nodes of $K$ are still contained in $I$, we see that $K\setminus \{u\}$
is not entirely contained in $S\setminus I$, therefore, $e_{I,J\setminus \{u\}} \in \Lambda$. 
We finished showing that under our assumptions there is always a node $u\in J$ such that removing $u$ from $J$ 
gives us another, bigger idempotent $e_{I,J} \leq e_{I,J\setminus \{u\}}$ in $\Lambda$. This finishes the proof.
\end{proof}

\begin{Theorem}\label{T:Jcoirreducibles}
Let $e:=e_{I,J}$ ($I,J\subset S$) be an idempotent from $\Lambda$. Then 
$M_e$ is J-coirreducible if and only if $I=\{s\}$ for some $s\in S$. 
Moreover, in this case, if we assume that the Coxeter-Dynkin diagram of $G_0$ is one of the types 
$\text{A}_n,\text{B}_n,\text{C}_n, F_4$, or $G_2$. then $M_e$ is J-irreducible if and only if $s$ is an end-node. 
\end{Theorem}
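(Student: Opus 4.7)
The first assertion combines Lemmas~\ref{L:prep1} and~\ref{L:prep2} directly: the former supplies the implication $I=\{s\}\Rightarrow M_e$ J-coirreducible, the latter its converse.

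For the ``Moreover'' clause, the plan is to recast J-irreducibility of $M_e$ as a cover-counting problem in $\Lambda$. By~(\ref{A:CSofM_e}), the cross-section lattice $\Lambda(M_e)$ is the upper interval $\{f\in\Lambda:f\ge e\}$, so $M_e$ is J-irreducible precisely when $e$ admits a unique cover in $\Lambda$. Using the description~(\ref{A:descriptionofLambda}) and the order~(\ref{A:essentialorder}), every cover of $e=e_{\{s\},J}$ has the form $e_{I',J'}$ with $I'\subseteq\{s\}$ and $J'\subseteq J$; the admissibility rule forces the choice $I'=\emptyset$ to entail $J'=\emptyset$, giving the single candidate $e_{\emptyset,\emptyset}$, which is an actual cover of $e$ only in the degenerate case $J=\emptyset$ (otherwise $e_{\{s\},\emptyset}$ sits strictly in between). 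The remaining candidates are the ``shrink-$J$'' covers $e_{\{s\},J\setminus\{u\}}$ for $u\in J$, each of which belongs to $\Lambda$ if and only if every connected component of $J\setminus\{u\}$ meets $\{s\}$.

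The final step is to invoke the hypothesis on the type. In $\text{A}_n,\text{B}_n,\text{C}_n,F_4,G_2$ the Coxeter-Dynkin diagram is a path, hence every connected subdiagram is itself a path with at most two end-nodes. Admissibility of $e$ forces $J$ to consist of a single connected component $K$ containing $s$ (or $J=\emptyset$), and the valid removals $u\in K$ are precisely the end-nodes of $K$ distinct from $s$, together with the trivial case $K=\{s\}$ where $u=s$ returns the admissible $J'=\emptyset$. If $s$ is an end-node of the ambient diagram, then $s$ has a single neighbor in $S$ and is automatically an end-node of every connected $K\ni s$; thus $K$ has at most one end-node distinct from $s$ and $e$ acquires a unique cover, giving the ``if'' direction. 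Conversely, if $s$ is not an end-node, choose neighbors $s',s''$ of $s$ in $S$ and set $J=\{s',s,s''\}$; then $K=J$ has two end-nodes other than $s$, so trimming either produces an admissible cover of $e$, yielding at least two covers and defeating J-irreducibility. I expect the main obstacle to be the careful bookkeeping of the degenerate cases $J=\emptyset$ and $K=\{s\}$, where the shrink-$I$ candidate $e_{\emptyset,\emptyset}$ must be weighed against the shrink-$J$ options; once that is under control, the path shape of the admitted Dynkin diagrams does the heavy lifting by capping the number of end-nodes of a connected subdiagram at two.
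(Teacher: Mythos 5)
Your treatment of the first equivalence is exactly the paper's: Lemma~\ref{L:prep1} gives one implication and Lemma~\ref{L:prep2} the other. Your cover analysis for the second claim is also correct and follows the paper's route: admissibility forces $J$ to be a single connected subpath $K$ containing $s$ (or $J=\emptyset$), the covers of $e$ are exactly the trims $e_{\{s\},K\setminus\{u\}}$ with $u$ an endpoint of $K$ distinct from $s$, plus the degenerate cases $K=\{s\}$ and $J=\emptyset$; and your ``if'' direction is carried out more carefully than in the paper.

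The gap is in the ``only if'' direction, and it is a quantifier error. The theorem fixes the idempotent $e=e_{\{s\},J}$, so to show that J-irreducibility of $M_e$ forces $s$ to be an end-node you must produce at least two covers of the \emph{given} $e$ whenever $s$ is interior; instead you ``set $J=\{s',s,s''\}$'', i.e.\ you exhibit a different idempotent sharing the same $s$. What this proves is only the weaker statement that every interior $s$ admits \emph{some} $J$ for which $M_{e_{\{s\},J}}$ fails to be J-irreducible. Moreover the pointwise claim cannot be repaired: your own count shows that the number of covers of $e_{\{s\},K}$ equals the number of endpoints of the path $K$ distinct from $s$ (and equals one when $K=\{s\}$ or $J=\emptyset$), so any $K$ having $s$ as an endpoint yields a unique cover even when $s$ is interior to the diagram --- already in type $\mathrm{A}_3$ the idempotent $e_{\{s\},\{s\}}$ with $s$ the middle node lies in $\Lambda$ and has the single cover $e_{\{s\},\emptyset}$. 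The criterion your analysis actually establishes is ``$s$ is an endpoint of the subpath $J$, or $J=\emptyset$''; being an end-node of the ambient diagram is sufficient but not necessary. To be fair, the paper's own proof is equally terse at this step (it identifies the covers as endpoint-trims of $J$ and stops), and the Remark following the theorem concedes that the J-irreducible connected stabilizers are not fully determined there; but as written your converse does not prove the asserted biconditional for a fixed $e$.
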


\begin{proof}
By combining Lemmas~\ref{L:prep1} and~\ref{L:prep2}, we obtain the proof of the first claim.  
For our second claim, we first note that, by our additional assumption on the Coxeter-Dynkin diagrams, 
any connected component of $J$ has exactly two end-nodes. 
We note also that since $e_{\{s\},J}\in \Lambda$, we have 
1) $s\in J$, 2) the connected component of $s$ in $J$ is equal to $J$.
Now, if $e_{I,J'}$ covers $e_{\{s\},J}$ and $I=\emptyset$, then by Remark~\ref{R:maximalelement} we know that 
$e_{I,J'}$ is the maximal element, so, our claim is true in this case.
If $I'=I$, then $J\setminus J'$ is a singleton, and furthermore, $s\in J'$ and the connected component of $s$ in $J'$ is equal to $J$.
But this means that $J'$ is obtained from $J$ by removing an end point. 
This finishes the proof. 
\end{proof}

In view of Remark~\ref{R:descriptions} and Theorem~\ref{T:Jcoirreducibles}, the following definition is meaningful.
\begin{Definition}
A J-coirreducible connected stabilizer $M_e$ in $M$ is called maximal if $e$ is of the form $e=e_{\{s\},S}$ for some $s\in S$. 
\end{Definition}

\begin{Remark}
The list of J-irreducibles connected stabilizers $M_e$ is not fully determined by Theorem~\ref{T:Jcoirreducibles}. 
It would be interesting to find a characterizations of the idempotents $e\in \Lambda$ such that $M_e$ is J-irreducible.
\end{Remark}

In our next result we determine explicitly the unit groups of maximal J-coirreducible connected stabilizers.

\begin{Theorem}\label{T:unitgroups}
Let $e$ be an idempotent from $\Lambda$. 
If $e$ is of the form $e=e_{\{s\},S}$ for some $s\in S$, 
then $C_G(e)$ is isomorphic to $G_0\times T_0'$, where $T_0'$ is a codimension one subtorus in $T_0$.
Furthermore, in this case, we have $C_G(e) / G_e^0 \cong (k^*)^{|S|-1}$.
\end{Theorem}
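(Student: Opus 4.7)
The plan is to generalize the argument of Corollary~\ref{C3:W of eWe}(1) (which handles the rank-one case where $J=S$) to the present hypothesis, by: computing the type of $e$, extracting Weyl-group information via Lemma~\ref{L:WofeMe}, identifying $G(eMe)$ from the face lattice of $eMe$, and finally feeding everything into the exact sequence of Lemma~\ref{L:M_e}.

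Since $J=S$ and the Dynkin diagram of $G_0$ is connected, the formulas following~(\ref{A:descriptionofLambda}) give $\lambda_*(e)=S$ and $\lambda^*(e)=\{s'\in S\setminus\{s\}: s's''=s''s' \text{ for every } s''\in S\}=\emptyset$. Hence $W(e)=W_*(e)=W$ and $W^*(e)=\{1\}$, and Lemma~\ref{L:WofeMe} yields $W(C_G(e))=W$, $W(G(eMe))=\{1\}$, and $W(G_e^0)=W$.

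For the second claim I analyze $eMe$. By~(\ref{A:essentialorder}) and~(\ref{A:descriptionofLambda}), its cross-section lattice is $\Lambda(eMe)=\{e_{I,S}: s\in I\subseteq S\}$, which the map $I\mapsto I\setminus\{s\}$ identifies with the Boolean lattice $2^{S\setminus\{s\}}$ of rank $|S|-1$. Since $G(eMe)$ has trivial Weyl group, $eMe$ is an affine toric variety whose dimension equals the rank of its face lattice; hence $\dim eMe = |S|-1$ and $G(eMe)\cong (k^*)^{|S|-1}$. The exact sequence $1\to G_e\to C_G(e)\to G(eMe)\to 1$ of Lemma~\ref{L:M_e} then gives $C_G(e)/G_e^0\cong(k^*)^{|S|-1}$, establishing the second claim.

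For the first claim, the condition $W(G_e^0)=W(G_0)$ forces the derived subgroup of $G_e^0$ to be isogenous to $G_0$, which inside $G=(G_0\times T_0)/Z_0$ is identified with the standard copy of $G_0$. Combining this with the codimension-one central subtorus $T_0'\subseteq T_0$ furnished by the surjection $C_G(e)\twoheadrightarrow(k^*)^{|S|-1}$ yields the almost-direct product decomposition $C_G(e)\cong G_0\times T_0'$. The main technical step is pinning down $T_0'$ as a codimension-one subtorus of $T_0$; this is carried out by a rank-and-dimension count using $\mathrm{rank}(e)=\dim Te$ together with the exact sequence of Lemma~\ref{L:M_e}, which controls the dimensions of both $G_e^0$ and the central torus of $C_G(e)$.
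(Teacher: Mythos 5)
Your proof is correct and follows essentially the same route as the paper's: compute $\lambda(e)=\lambda_*(e)=S$ and $\lambda^*(e)=\emptyset$ so that $W(C_G(e))=W$ and $W(eC_G(e))=\{1\}$, identify $\Lambda(eMe)$ with the Boolean lattice on $S\setminus\{s\}$ to conclude that $G(eMe)$ is a torus of dimension $|S|-1$, and feed this into the exact sequence of Lemma~\ref{L:M_e}. The one place you defer detail --- pinning down $T_0'$ --- is closed in the paper by exactly the dimension count you gesture at: since $G_0\subset G_e$, a maximal torus of $C_G(e)$ has dimension at least $\dim T_0+\dim G(eMe)=2|S|-1$, and it cannot have dimension $2|S|$ (else $C_G(e)=G$), so it is exactly $2|S|-1$ and $C_G(e)\cong G_0\times T_0'$ with $T_0'\subset T_0$ of codimension one.
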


\begin{proof}
We argue as in the proof of Corollary~\ref{C3:W of eWe}.
Since $\lambda(e ) = \lambda_*(e) = S$, we see that $\lambda^*(e)=\emptyset$ and that the Weyl group of $C_G(e)$ is $W$.
Since $\lambda_*$ of $\Lambda(M)$ agrees with the $\lambda_*$ of $\Lambda(M_e)$, we see also that $W(G_e^0)=W$,
and therefore, $W(eC_G(e))=\{1\}$. It follows that both of the groups $C_G(e)$ and $G_e^0$ contain a copy of $G_0$,
and $eMe$ is a torus embedding. Recall that the cross-section lattice of $eMe$ is given by 
$\Lambda ( eMe)= \{ f\in \Lambda (M):\ f \leq e\}$. It is easy to verify that $e_{I,J}\leq e_{\{s\}, S}$ if and only if $J=S$, and 
$I$ is a subset of $S$ with $s\in I$. Therefore, $\Lambda(eMe)$ is isomorphic to the Boolean lattice on the set $S\setminus \{s\}$.
In particular, its height is $|S|-1$. This observation shows that, as a torus embedding, the dimension of $eMe$ is given by 
$\dim eMe = |S|-1$, hence, the group of units of $eMe$, that is $G(eMe)$, is a torus of dimension $|S|-1$. 
By Lemma~\ref{L:M_e}, this proves our last claim. 
To see the validity of our first claim, note that $G_0\subset G_e$, hence, once again by Lemma~\ref{L:M_e}, $G(eMe) \cap G_0 = \{1\}$. 
In particular, the maximal torus of $C_G(e)$ is at least $2|S|-1$ dimensional. Clearly, it cannot be of dimension $2|S|$, otherwise, 
we would have $C_G(e)=G$. From this we conclude that a maximal torus in $C_G(e)$ has dimension exactly $2|S|-1$, therefore, 
$C_G(e)$ is of the form $G_0\times T_0'$ with $T_0'\subset T_0$ a codimension one subtorus. This finishes the proof. 
\end{proof}

The proof of the following result now follows from Theorem~\ref{T:unitgroups} and Lemma~\ref{L:M_e}.

\begin{Corollary}
If $e$ is an idempotent as in Theorem~\ref{T:unitgroups}, then $G_e^0$ is isomorphic to $G_0$. 
\end{Corollary}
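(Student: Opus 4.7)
The plan is to combine the structural information from Theorem~\ref{T:unitgroups} with the exact sequence from Lemma~\ref{L:M_e} and then argue on dimensions. Since the theorem gives us both $C_G(e)\cong G_0\times T_0'$ (with $T_0'$ a codimension-one subtorus of $T_0$, hence $\dim T_0' = |S|-1$) and $C_G(e)/G_e^0\cong (k^*)^{|S|-1}$, the dimension of $G_e^0$ is forced. More precisely, Lemma~\ref{L:M_e} gives the short exact sequence
\[
1\longrightarrow G_e \longrightarrow C_G(e) \longrightarrow G(eMe) \longrightarrow 1,
\]
and the proof of Theorem~\ref{T:unitgroups} identifies $G(eMe)$ with a torus of dimension $|S|-1$. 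Therefore $\dim G_e = \dim C_G(e) - (|S|-1) = \dim G_0$, and $G_e^0$ is connected of this same dimension.

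Next I would invoke the inclusion $G_0\subset G_e$, which was already recorded inside the proof of Theorem~\ref{T:unitgroups}. Since $G_0$ is connected, this inclusion refines to $G_0\subset G_e^0$, and now both groups have the same dimension inside $C_G(e)\cong G_0\times T_0'$.

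To upgrade the dimension equality to an actual equality of subgroups, I would consider the projection $\mathrm{pr}\colon G_0\times T_0'\to T_0'$. Its kernel is exactly $G_0$, which we already know lies in $G_e^0$. The image $\mathrm{pr}(G_e^0)$ is a connected subgroup of $T_0'$ of dimension at most $\dim G_e^0 - \dim G_0 = 0$, hence trivial. Therefore $G_e^0 \subseteq G_0 \times \{1\} = G_0$, and combined with $G_0\subseteq G_e^0$ we conclude $G_e^0 = G_0$.

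There is no real obstacle here; the statement is essentially a bookkeeping consequence of Theorem~\ref{T:unitgroups} and Lemma~\ref{L:M_e}. The only subtlety is remembering that $G_0 \subset G_e$ was obtained inside the proof of Theorem~\ref{T:unitgroups} (from the fact that $G_0$ fixes $e$ on both sides, equivalently $G(eMe)\cap G_0 = \{1\}$), so that the projection argument has something to latch onto; without this extra input the dimension count alone would only produce an isogeny between $G_e^0$ and $G_0$, not an isomorphism.
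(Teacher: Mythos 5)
Your proposal is correct and follows exactly the route the paper intends: the paper's (one-line) proof simply cites Theorem~\ref{T:unitgroups} and Lemma~\ref{L:M_e}, and your argument is the natural fleshing-out of that citation — the exact sequence forces $\dim G_e^0=\dim G_0$, the inclusion $G_0\subset G_e^0$ from the proof of the theorem pins down the subgroup, and connectedness finishes it (your projection step could even be shortened: a connected algebraic group has no proper closed connected subgroup of full dimension). The only quibble is attributional: in the paper $G_0\subset G_e$ is asserted first and $G(eMe)\cap G_0=\{1\}$ is deduced from it via Lemma~\ref{L:M_e}, not the other way around.
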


We proceed to determine the ``types'' of the J-coirreducible connected stabilizers in $M$. 
\begin{Definition}\label{D:type}
Let $M$ be a J-coirreducible monoid with the unique idempotent $e_0$ from $\Lambda^1(M)$. 
The {\em type of $M$} is the subset $I = \lambda(e_0)$ in $S$. 
We will denote the type of $M$ by $\text{type}(M)$. 
\end{Definition}

We return to our notational convention that $M=\text{Env}(G_0)$.
Let $M_e$ be a maximal J-coirreducible monoid in $M$. By Theorem~\ref{T:unitgroups}, we know that the unit group of $M_e$ 
is $G_0$, hence, its set of simple roots is given by $S$. 

\begin{Proposition}
Let $M_e$ be a maximal J-coirreducible monoid in $M$. 
If $e=e_{\{s\},S}$, then $\text{type}(M_e) = S\setminus \{s\}$. 
\end{Proposition}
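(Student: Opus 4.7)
The plan is to pick out the unique corank-one element of $\Lambda(M_e)$ and then evaluate the type map of $M_e$ on it using the explicit description of the type map of $M$ from~\cite[Theorem 6.18]{Renner}.

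First, I would enumerate $\Lambda(M_e)$ using the identity $\Lambda(M_e) = \{f \in \Lambda(M) : f \geq e_{\{s\},S}\}$ together with the order $e_{I_1,J_1} \leq e_{I_2,J_2} \iff I_1 \supseteq I_2 \text{ and } J_1 \supseteq J_2$. This forces $I \subseteq \{s\}$ and $J \subseteq S$. Combining with the defining condition on $\Lambda(M)$ that no connected component of $J$ lies in $S\setminus I$, and invoking Remark~\ref{R:maximalelement} to rule out $I=\emptyset$ with $J\neq \emptyset$, I would obtain
\begin{equation*}
\Lambda(M_e) = \{e_{\emptyset,\emptyset}\} \cup \{e_{\{s\},J} : J=\emptyset \text{ or } J \text{ is a connected subdiagram containing } s\}.
\end{equation*}

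Next, since each $e_{\{s\},J}$ with $J\neq \emptyset$ satisfies $e_{\{s\},J} \leq e_{\{s\},\emptyset} < e_{\emptyset,\emptyset}$, the unique element covered by the top $e_{\emptyset,\emptyset}$ in $\Lambda(M_e)$ is $e_{\{s\},\emptyset}$. In other words, $\Lambda^1(M_e) = \{e_{\{s\},\emptyset}\}$, and this is the idempotent whose image under the type map of $M_e$ is, by Definition~\ref{D:type}, $\text{type}(M_e)$.

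The remaining step is to compute that image. By Theorem~\ref{T:unitgroups} and the corollary immediately following it, the connected stabilizer group $G_e^0$ is isomorphic to $G_0$, so the Weyl group of the unit group of $M_e$ is $W$ with the same Coxeter generating set $S$. Consequently, the intrinsic type map of $M_e$ is $\lambda^{M_e}(f) = \{s' \in S : s' f = f s'\}$, which agrees with $\lambda^M(f)$ on every $f \in \Lambda(M_e)\subseteq \Lambda(M)$. Applying the explicit description of $\lambda^M$ to $f = e_{\{s\},\emptyset}$ then gives
\begin{equation*}
\lambda^M(e_{\{s\},\emptyset}) = \lambda_*(e_{\{s\},\emptyset}) \sqcup \lambda^*(e_{\{s\},\emptyset}) = \emptyset \sqcup (S\setminus \{s\}) = S\setminus \{s\},
\end{equation*}
which is exactly the stated conclusion. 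The only real subtlety, namely that the intrinsic type map of $M_e$ is governed by the same pair $(W,S)$ as that of $M$, is handled by the prior identification $G_e^0 \cong G_0$; once this is noted, the argument reduces to a direct reading of Renner's formulas.
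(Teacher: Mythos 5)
Your proof is correct and follows essentially the same route as the paper: identify $e_{\{s\},\emptyset}$ as the unique element of $\Lambda^1(M_e)$, check that the type map of $M_e$ is governed by the same pair $(W,S)$ because $G_e^0\cong G_0$, and evaluate Renner's formulas there to get $\lambda(e_{\{s\},\emptyset})=\lambda^*(e_{\{s\},\emptyset})=S\setminus\{s\}$. The only cosmetic difference is that you compute $\lambda_*(e_{\{s\},\emptyset})=\emptyset$ directly from $\lambda_*(e_{I,J})=J$, whereas the paper deduces it indirectly from the observation that the full set $S$ cannot be the type of a J-coirreducible monoid.
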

\begin{proof}
Note that the unique maximal element in $\Lambda(M_e)\setminus \{e_{\emptyset,\emptyset}\}$ is given by $e_{\{s\},\emptyset}$.
The $\lambda^*$ of $M_e$ is given by the restriction of $\lambda^*$ of $\Lambda$. 
Since $\lambda^*(e_{I,J}) = \{ s \in S \setminus I :\ \text{ $s s' =s' s$ for all $s' \in J$}\}$, 
we see that $\lambda^*(e_{\{s\},\emptyset}) = S\setminus \{s\}$. On the other hand, we have $\lambda = \lambda^* \sqcup \lambda_*$.
But we cannot have $\lambda_*( e_{\{s\},\emptyset} ) \neq \emptyset$, otherwise, $\lambda(e_{\{s\},\emptyset}) = S$;
the full set of simple roots cannot be the type of a J-coirreducible monoid. Hence, $\lambda(e_{\{s\},\emptyset}) = \lambda^*(e_{\{s\},\emptyset})=S\setminus \{s\}$. This finishes the proof.  
\end{proof}

\section{The Navel}\label{S:Navel}

We will now discuss another extreme case. 
Let us call an idempotent in a cross-section lattice $e\in \Lambda$ a {\em navel of $\Lambda$} if $\lambda(e) = \emptyset$. 
If $e$ is a navel of $\Lambda$, then we have $\lambda^*(e) = \lambda_*(e) = \emptyset$. 
Clearly, the converse of this statement is true as well; if $\lambda^*(e) = \lambda_*(e) = \emptyset$, then $e$ is a navel. 
Not every reductive monoid has a navel. 
To give an example, let us consider the monoid of $n\times n$ matrices $M=\textrm{Mat}_n$. 
Let $e_r$ ($r\in \{1,\dots, n\}$) denote the matrix 
\[
e_r := \mathbf{I}_r \oplus \mathbf{0}_{n-r},
\] 
where $\mathbf{I}_r$ is the neutral element in $\textrm{Mat}_r$, and $\mathbf{0}_{n-r}$ is the zero element of $\textrm{Mat}_{n-r}$. 
Then $\Lambda = \{ 0,e_1,\dots, e_n\}$ is a cross-section lattice for $M$.
It is easy to verify that, for every $e_r \in \Lambda$, either $\lambda_*(e_r) \neq \emptyset$, or $\lambda^*(e_r)\neq \emptyset$. 
Thus, $\textrm{Mat}_n$ has no navel. 
In the sequel we will present a generalization of this observation. 

\begin{Lemma}\label{L:Navel}
If $e$ is a navel of $\Lambda$ in a reductive monoid $M$, then the reductive groups $C_G(e), G_e^0$, and $G(eMe)$
are tori. In particular, both of the local monoids $M_e$ and $eMe$ are affine torus embeddings. 
\end{Lemma}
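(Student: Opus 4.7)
The plan is to leverage the Weyl-group dictionary recorded in Lemmas~\ref{L:M_e} and~\ref{L:WofeMe} and the elementary fact that a connected reductive group is a torus if and only if its Weyl group is trivial. The navel hypothesis is tailor-made to kill the Weyl groups of all three groups simultaneously.

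First I would record that $\lambda(e)=\emptyset$ forces $W(e)=W_{\lambda(e)}=\{1\}$. Recalling from the preliminaries that $W(e)=\{a\in W:ae=ea\}=C_W(e)$, this gives $C_W(e)=\{1\}$. Now Lemma~\ref{L:WofeMe} identifies $C_W(e)$ with the Weyl group $W(C_G(e))$ of $C_G(e)$ and simultaneously decomposes $C_W(e)=W(eMe)\times W(M_e)$. Since $C_W(e)$ is trivial, each factor is trivial, so the Weyl groups of $C_G(e)$, of $G(eMe)$, and of $G(M_e)=G_e^0$ are all trivial.

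Next, by Lemma~\ref{L:M_e}, the groups $C_G(e)$ and $G_e$ are reductive, and the exact sequence there exhibits $G(eMe)$ as the reductive quotient $C_G(e)/G_e$. A connected reductive group with trivial Weyl group is a torus, so $C_G(e)$, $G_e^0$ and $G(eMe)$ are all tori, as claimed. For the second sentence, observe that $M_e=\overline{G_e^0}$ by definition, hence it is the closure of a torus in the affine variety $M$, that is, an affine torus embedding; likewise $eMe$ is an affine reductive monoid whose unit group $G(eMe)$ is a torus, so it is an affine torus embedding as well.

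There is no real obstacle here; the only point requiring care is that one is entitled to speak of ``the Weyl group of $C_G(e)$'' in the first place, which is precisely the content of Lemma~\ref{L:WofeMe}(1). Everything else is a direct application of the dictionary already in place.
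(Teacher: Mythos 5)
Your proof is correct and follows essentially the same route as the paper's: both arguments reduce to the observation that $W(e)=W_*(e)=W^*(e)=\{1\}$ forces the Weyl groups of $C_G(e)$, $G_e^0$, and $G(eMe)$ to be trivial, so these connected reductive groups have no unipotent part and are therefore tori. You merely spell out the appeal to Lemma~\ref{L:WofeMe} and the torus-embedding conclusion, which the paper leaves implicit.
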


\begin{proof}
Since $W(e) = W_*(e) = W^*(e) = \{1\}$, the reductive groups have no unipotent components, therefore, 
$C_G(e)$, $G_e^0$, and $G(eMe)$ are tori.
\end{proof}

\begin{Definition}\label{D:type co}
Let $M$ be a J-irreducible monoid with the unique minimal nonzero idempotent $e_0$ from $\Lambda_1(M)$. 
The {\em type of $M$} is the subset $I = \lambda(e_0)$ in $S$. 
\end{Definition}

\begin{Proposition}\label{P:NavelofJirreducible}
Let $M$ be a J-irreducible monoid of type $J$, and let $\Lambda$ be the cross-section lattice of $M$. 
Then $\Lambda$ has a navel if and only if $J=\emptyset$. 
In this case, the navel is equal to the minimal nonzero idempotent of $\Lambda$.
\end{Proposition}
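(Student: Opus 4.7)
The plan is to reduce the statement to the Putcha--Renner classification of J-irreducible monoids. Recall (see~\cite[Chapter 14]{Putcha}) that if $M$ is J-irreducible of type $J\subseteq S$, then the assignment $e\mapsto \lambda^*(e)$ is a bijection from $\Lambda\setminus\{0\}$ onto the collection
\[
\mathcal{I}(J) := \{ I\subseteq S :\ \text{no connected component of } I \text{ is contained in } J\},
\]
and under this bijection one has the explicit formula
\[
\lambda_*(e)=\{s\in J :\ ss'=s's \text{ for every } s'\in\lambda^*(e)\}.
\]

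First, I would verify directly from the definitions that $\lambda_*(e_0)=J$, where $e_0$ denotes the minimum nonzero idempotent of $\Lambda$. Indeed, the only idempotents $f\in\Lambda$ with $f\leq e_0$ are $0$ and $e_0$; combined with $\lambda(0)=S$ (since $0$ commutes with everything in $M$), this gives $\lambda_*(e_0)=\lambda(0)\cap\lambda(e_0)=S\cap J=J$. The decomposition $\lambda(e_0)=\lambda^*(e_0)\sqcup\lambda_*(e_0)$ then yields $\lambda^*(e_0)=\emptyset$, so that $e_0$ corresponds to the empty set under the Putcha--Renner bijection.

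For the ``if'' direction, assume $J=\emptyset$. Then $\lambda(e_0)=J=\emptyset$, and so $e_0$ is itself a navel of $\Lambda$. For the converse, suppose $e$ is a navel, so that $\lambda(e)=\emptyset$ and in particular $\lambda^*(e)=\emptyset$. Since $\lambda(0)=S$ is nonempty (the case $S=\emptyset$ being trivial), we have $e\neq 0$, hence $e\in\Lambda\setminus\{0\}$. The injectivity of $e\mapsto\lambda^*(e)$ now forces $e=e_0$, and then $\lambda(e_0)=\emptyset$ together with $\lambda(e_0)=J$ gives $J=\emptyset$. In either direction the navel is identified as $e_0$, the minimum nonzero idempotent.

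The main obstacle is the reliance on the Putcha--Renner structure theorem; once the bijection $e\mapsto\lambda^*(e)$ and the explicit description of $\lambda_*$ are in hand, the argument amounts to unwinding definitions. A fully self-contained proof would instead require showing directly that $\lambda^*(e)=\emptyset$ implies $e=e_0$, which essentially reproves the relevant portion of Putcha's classification of J-irreducible monoids.
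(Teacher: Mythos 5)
Your proof is correct and follows essentially the same route as the paper: both arguments rest on the Putcha--Renner fact that $\lambda^*$ is injective on $\Lambda\setminus\{0\}$ (the paper cites \cite[Theorem 4.16]{PutchaRenner88}) together with the computation $\lambda_*(e_0)=\lambda(e_0)=J$, hence $\lambda^*(e_0)=\emptyset$, to identify any navel with the minimal nonzero idempotent. The only cosmetic difference is that you invoke the full bijection onto $\mathcal{I}(J)$ where the paper uses only injectivity and order-preservation of $\lambda^*$.
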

\begin{proof}
Let us assume that $e$ is a navel in $\Lambda$.
By~\cite[Theorem 4.16]{PutchaRenner88}, the map \hbox{$\lambda^*: \Lambda\setminus \{0\} \to 2^S$} 
is injective, and furthermore, it is order preserving. 
Since by our assumption $\lambda^*(e) = \emptyset$, $e$ must be the minimal nonzero idempotent in $\Lambda$.
But then for any $s\in J$ we have $se =e s = e$. In other words, $\lambda_*(e) = J$. 
This argument is reversible. Thus, we see that $e$ is a navel in $\Lambda$ if and only if $J=\emptyset$.   
\end{proof}

By the dual argument, we have the following proposition whose proof is omitted. 
\begin{Proposition}\label{P:NavelofJcoirreducible}
Let $M$ be a J-coirreducible monoid of type $J$, and let $\Lambda$ be the cross-section lattice of $M$. 
Then $\Lambda$ has a navel if and only if $J=\emptyset$. In this case, the navel is equal to the maximal idempotent of $\Lambda\setminus \{1\}$.
\end{Proposition}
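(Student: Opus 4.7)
The plan is to carry out the argument in direct parallel with the proof of Proposition~\ref{P:NavelofJirreducible}, with the roles of $\lambda^*$ and $\lambda_*$, and of the top and bottom of $\Lambda$, interchanged.

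The main input is the dual of \cite[Theorem 4.16]{PutchaRenner88}: the restricted map $\lambda_* : \Lambda \setminus \{1\} \to 2^S$ is injective. Moreover, directly from the definition $\lambda_*(e) = \bigcap_{f \le e} \lambda(f)$, if $e \le e'$ in $\Lambda$ then $\{f \in \Lambda : f \le e\} \subseteq \{f \in \Lambda : f \le e'\}$, and so $\lambda_*(e) \supseteq \lambda_*(e')$; that is, $\lambda_*$ is order-reversing. Combined with injectivity, strict inequality is preserved: $e < e'$ in $\Lambda \setminus \{1\}$ implies $\lambda_*(e) \supsetneq \lambda_*(e')$.

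For the forward direction, I would assume $e \in \Lambda$ is a navel, so that $\lambda^*(e) = \lambda_*(e) = \emptyset$. If there were any $e' \in \Lambda \setminus \{1\}$ with $e' > e$, the strict reversal would force $\lambda_*(e') \subsetneq \emptyset$, which is impossible; hence $e$ must be maximal in $\Lambda \setminus \{1\}$. Because $M$ is J-coirreducible, this maximum is unique and coincides with the distinguished corank-one idempotent $e_0 \in \Lambda^1(M)$, so $e = e_0$. Then Definition~\ref{D:type} gives $J = \lambda(e_0) = \lambda^*(e_0) \sqcup \lambda_*(e_0) = \emptyset$. For the converse, if $J = \emptyset$, then $\lambda(e_0) = \emptyset$, hence both $\lambda^*(e_0)$ and $\lambda_*(e_0)$ are empty, and $e_0$ is the navel.

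The only step that is not a routine transcription of the J-irreducible case is the dual injectivity claim for $\lambda_*$ on $\Lambda \setminus \{1\}$; this is the principal obstacle and is handled either by invoking the part of Putcha--Renner's theory that treats $\lambda_*$ symmetrically with $\lambda^*$, or by appealing to the formal order-reversing duality that exchanges J-irreducible and J-coirreducible behavior. Once that injectivity is in hand, everything else is a mechanical top-bottom swap.
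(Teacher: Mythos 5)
Your argument is exactly the ``dual argument'' the paper invokes (it omits the proof of this proposition, saying only that it is dual to Proposition~\ref{P:NavelofJirreducible}, whose proof you have transcribed faithfully with $\lambda_*$ in place of $\lambda^*$ and the top of $\Lambda$ in place of the bottom). You also correctly isolate the one genuine input --- injectivity of $\lambda_*$ on $\Lambda\setminus\{1\}$, the order-reversing counterpart of the cited Putcha--Renner result --- so the proposal matches the paper's intended proof.
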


\begin{Remark}
For tautological reasons, in diagonal monoids, every idempotent can be viewed as a navel.
\end{Remark}

\begin{Proposition}\label{P:NavelofEnv}
Let $e=e_{I,J}$ be an idempotent in the cross-section lattice of an enveloping monoid. 
Then $e$ is a navel of $\Lambda$ if and only if $e=e_{S,\emptyset}$. 
In this case, we have isomorphisms 
\[
G_e^0 \cong G(eMe) \cong T_0\qquad \text{and}\qquad C_G(e) \cong T_0\times T_0.
\]
In particular, both of the local monoids $M_e$ and $eMe$ are affine torus embeddings. 
\end{Proposition}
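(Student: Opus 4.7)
The plan is to treat the two assertions separately, first pinning down the navel condition combinatorially and then deducing the unit groups from the structural results of Section~\ref{S:Rank1} and Lemma~\ref{L:Navel}.

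For the first assertion, I would plug $e = e_{I,J}$ into the formulas recalled after~(\ref{A:descriptionofLambda}): we have $\lambda_*(e_{I,J}) = J$ and $\lambda^*(e_{I,J}) = \{s \in S \setminus I : ss' = s's\text{ for all } s' \in J\}$. The navel condition $\lambda(e) = \emptyset$ splits as $\lambda_*(e) = \emptyset$, forcing $J = \emptyset$, and then $\lambda^*(e_{I,\emptyset}) = S \setminus I$, forcing $I = S$. Conversely, $e_{S,\emptyset}$ lies in $\Lambda$ vacuously (no nonempty component of $\emptyset$ can be contained in $S \setminus S$), so it is a genuine element of the cross-section lattice, and by the computation above it is a navel.

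For the second assertion, set $e = e_{S,\emptyset}$. By Lemma~\ref{L:Navel}, $C_G(e)$ is a torus, and because $e \in \overline{T}$ the maximal torus $T$ of $G$ centralizes $e$; since $T$ is already maximal, the chain $T \subseteq C_G(e) \subseteq T$ forces $C_G(e) = T = (T_0 \times T_0)/Z_0$, a torus of dimension $2|S|$, hence isomorphic to $T_0 \times T_0$. To identify $G_e^0$ and $G(eMe)$, I would describe the local cross-section lattices via~(\ref{A:CSofM_e}) and~(\ref{A:essentialorder}). They are
\[
\Lambda(M_e) = \{e_{I,\emptyset} : I \subseteq S\} \quad\text{and}\quad \Lambda(eMe) = \{e_{S,J} : J \subseteq S\},
\]
both well-defined in $\Lambda$ (the second vacuously), and both isomorphic to the Boolean lattice on $S$ under the order~(\ref{A:essentialorder}). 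Since $G_e^0$ and $G(eMe)$ are both tori by Lemma~\ref{L:Navel}, the monoids $M_e$ and $eMe$ are affine torus embeddings whose cross-section lattices coincide with their face lattices; hence each has dimension equal to the height $|S|$ of its Boolean cross-section lattice. Both tori then have the same dimension as $T_0$, so $G_e^0 \cong G(eMe) \cong T_0$.

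The only genuinely delicate step is the dimension identification via the height of the cross-section lattice; here I would cite the standard correspondence between faces of the weight cone of an affine torus embedding and the idempotents in its cross-section lattice, which in the Boolean case pins the embedding down to $\mathbb{A}^{|S|}$ (so the torus is $(k^*)^{|S|}$). The verification of~(\ref{A:CSofM_e}) for $\Lambda(eMe)$ and the exactness of Lemma~\ref{L:M_e} (for the dimension check $\dim T = \dim G_e^0 + \dim G(eMe)$) are then routine consistency checks.
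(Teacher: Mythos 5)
Your proposal is correct and follows essentially the same route as the paper: the navel condition is read off from the formulas $\lambda_*(e_{I,J})=J$ and $\lambda^*(e_{I,J})=S\setminus I$ (when $J=\emptyset$), and the unit groups are identified by combining Lemma~\ref{L:Navel} with the observation that $\Lambda(M_e)$ and $\Lambda(eMe)$ are Boolean lattices on $S$ of height $|S|=\dim T_0$. Your direct argument that $C_G(e)=T$ (a torus containing the maximal torus) is a slight shortcut over the paper's appeal to Lemma~\ref{L:M_e}, and the only quibble is that a Boolean face lattice pins down the \emph{dimension} of the torus embedding rather than forcing it to be $\mathbb{A}^{|S|}$ on the nose --- but only the dimension is needed here.
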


\begin{proof}
Let $e_{I,J}$ be an idempotent in the cross-section lattice of an enveloping monoid. 
Then $\lambda(e_{I,J}) = \lambda_*(e_{I,J}) \sqcup \lambda^*(e_{I,J})$, where 
$\lambda_*(e_{I,J}) = J$ and $\lambda^*(e_{I,J}) = \{s\in S \setminus I: ss'=s's 
\text{ for every } s'\in J \}$. 
Thus, $\lambda(e_{I,J})= \emptyset$ if and only if 
1) $J=\emptyset$ and 2) $\{s\in S \setminus I: ss'=s's \text{ for every } s'\in J \} = \emptyset$.
Clearly, if $J=\emptyset$ and $I=S$, then 1) and 2) hold.
Conversely, if 1) and 2) hold, then $J=\emptyset$, and 
$\emptyset = \{s\in S \setminus I: ss'=s's \text{ for every } s'\in J \} = S\setminus I$,
and hence, $I=S$. This proves the first assertion. 

Next, by Lemma~\ref{L:Navel}, we know that the reductive groups 
$C_G(e)$, $G_e^0$, and $G(eMe)$ are tori; we will compute their dimensions.
Since $e= e_{S,\emptyset}$, both of the sublattices $\Lambda ( eMe )$ and $\Lambda (M_e)$ are isomorphic to the 
Boolean lattice on $S$, which is of height $|S| = \dim T_0$. Once again, the rest of the proof follows from Lemma~\ref{L:M_e}.
\end{proof}

Let $GeG$ be a $G\times G$-orbit in a reductive monoid $M$.
The following fibration is well-known,
\begin{align}\label{A:CR}
eC_G(e) \to GeG \to G/P \times G/P^-,
\end{align}
where $P$ (resp. $P^-$) is the right (resp. the left) stabilizer of $e$ in $G$; see, for example,~\cite[Lemmas 3.5 and 3.6]{CanRenner}.
In particular, $P\cap P^- = C_G(e)$. 
Now let $M$ be the enveloping semigroup of $G_0$, and let $e$ denote the navel of $\Lambda$. 
For simplicity, let us assume that $G_0$ is simply connected and of adjoint type. 
Since the rank of the idempotent $e$ is $|S|$, we know that $eC_G(e) = eT \cong T_0$.

We claim that the dimension of the $G\times G$-orbit $GeG$ in $M$ is equal to $\dim G - \dim T_0$. 
To see this, we refer to a theorem of Vinberg. 
Let $M^{spr}$ denote the open subset 
\[
M^{spr} := \bigsqcup_{f \geq e} G f G \subset M.
\]
Let $Z$ denote the center of $G$. Then $Z\cong T_0$. 
In~\cite[Theorem 7]{Vinberg1}, Vinberg shows that the geometric quotient $\pi: M^{spr} \to M^{spr}/Z$ exists, and furthermore, 
$M^{spr}/Z$ is isomorphic to the wonderful compactification of $G/Z \cong G_0$. 
The closed $G_0\times G_0$-orbit in $M^{spr}/Z$ is isomorphic to $G_0/B_0\times G_0/B_0^-$, where $B_0$
is a Borel subgroup of $G_0$ containing $T_0$. 
Since $\pi$ is $G\times G$-equivariant, the closed orbit of $M^{spr}$ is a fibration over $G_0/B_0\times G_0/B_0^-$ with fiber $Z$. 
But $G_0/B_0 \cong G_0/B_0^- \cong G/B$, therefore, we get the torus fibration 
\begin{align}\label{A:V}
Z \to GeG \to G/B\times G/B^-.
\end{align}
The dimension of $G/B$ is equal to the dimension of the unipotent radical of $B$, $\dim G/B = \dim U$.
As $\dim G = 2\dim U + \dim T$, we see from (\ref{A:V}) that $\dim GeG = \dim G - \dim T_0$. 
This finishes the proof of our claim. 
Now as a simple corollary of this fact, we see that for $M=\textrm{Env}(G_0)$ and for $e$ the navel of $M$, 
the fibrations (\ref{A:V}) and (\ref{A:CR}) are equal. 
In particular, the stabilizer of $e$ in $G\times G$ is a horospherical subgroup, that is to say, 
it is contains a maximal unipotent subgroup of $G\times G$.

We finish this section by computing the local monoids associated with the navels of the J-coirreducible and J-irreducible monoids of type $\emptyset$. 
Let $e$ denote the navel of a J-coirreducible monoid $M$.
Since the cross-section lattice of $M_e$ is 1 dimensional and since the unit group of $M_e$ is a torus, 
we see that $M_e \cong \mathbb{A}^1$. Likewise, the unit group of $eMe$ is an $|T|-1$-dimensional torus, $T'\subset T$. 
Therefore, $G(eMe) = eT'$, and $eMe = \overline{eT'}$.
By arguing in a similar way, we find that if $M$ is a J-irreducible monoid of type $\emptyset$ and $e$ is the navel of $M$, then 
$eMe = \mathbb{A}^1$ and $M_e \cong \overline{eT'}$, where $T'$ is a codimension one subtorus in $T$.

\section{Atomic Lattices}\label{S:Atomic}

We start with reviewing Vinberg's description of the parametrizing sets for $G\times G$-orbits $M=\textrm{Env}(G_0)$.
We will show that the lattice of $G\times G$-orbits in $M$ is an atomic lattice. 

We maintain our notation from the preliminaries. 
In addition, we have the following notation: if $H$ is a closed subgroup of $G$, 
then $\OO(H)$ denotes $\OO(H):= \textbf{Hom}_{k\text{-grp}}(H,k^\times)$, the group of characters of $H$.
Then we set $\HO(H):=\textbf{Hom}_\Z ( \OO(H), \Q)$.
Now, recall that $T$ denotes the maximal torus in $G= (G_0\times T_0)/Z_0$, which is the unit group of $M$. 
Then $T \cong (T_0\times T_0)/Z_0$.
We will denote by $\alpha_1,\dots, \alpha_l$ the simple roots determined by $(G_0,B_0,T_0)$.
Let $\alpha_1^{\vee},\dots, \alpha_l^{\vee}$ denote 
the corresponding dual roots, and let $\CC$ denote the Weyl chamber,
\[
\CC= \{ f\in \HO(T) :\ f(\alpha_i^\vee) \geq 0 \ \text{ for } i =1,\dots, l\}.
\]
Let $\KK$ denote a closed, convex, polyhedral cone in $\HO(T)$ such that 
\begin{enumerate}
\item $-\alpha_i \in \KK$ for $i\in \{1,\dots, l\}$,
\item the cone $\KK \cap \CC$ generates $\HO(T)$.
\end{enumerate}
Since the Lie algebras of $G,G_0,T,T_0,Z_G$ are related to each other as follows
\[
\textrm{Lie}(G) = \textrm{Lie}(G_0) \oplus \textrm{Lie}(Z_G)
\ \text{ and }\  \textrm{Lie}(T) = \textrm{Lie}(T_0) \oplus \textrm{Lie}(Z_G),
\]
we see that 
\[
\HO(T) = \HO(T_0) \oplus \HO(Z_G) \ \text{ and } \
\CC = \HO(Z_G) \oplus \CC_0,
\]
where $\CC_0$ is the Weyl chamber of $(G_0,B_0,T_0)$.
We put $\MM:= \KK \cap \HO(Z_G)$. 
Clearly, $\MM \cap \CC_0 = \{ 0 \}$.
Note that $\MM$ is a pointed cone.
Moreover, if $\theta : Z_G \to T_0$ is a homomorphism such that $\theta \vert_{Z_0} = id$, then we have
\begin{enumerate}
\item $\KK:= \{ (\chi,\lambda) \in \HO(Z_G)\oplus \HO(T_0) :\ \chi-\theta^*(\lambda) \in \MM\}$;
\item the cone $\MM$ generates $\HO(Z_G)$;
\item $(\theta^*)^{-1} (\MM) \cap (-\CC_0) = \{0\}$.
\end{enumerate}
Finally, we see from these facts/definitions that $\KK \cap \CC \cong \MM \times \CC_0$.
For each subset $I \subseteq \{\alpha_1,\dots, \alpha_l\}$ we have a unique face of $\MM$,
denoted by $\MM_I$, which is spanned by $I$ as a convex cone. In a similar way, 
for each subset $J\subset \{\omega_1,\dots, \omega_l\}$, we have a unique face of $\CC_0$,
denoted by $(\CC_0)_J$, which is spanned by $J$. 
Therefore, the faces of $\KK \cap \CC$ are given by 
\begin{align}\label{A:Faces}
F_{I,J} := \{ (\chi, \lambda) 
 \in \HO(Z_G)\oplus \HO(T_0) :\ \chi-\theta^*(\lambda) \in  \MM_I,\ \lambda \in (\CC_0)_J \}.
\end{align}

\begin{Notation}
From now on, for positive integers, $l\in \Z_{\geq 1}$, we will use the shorthand $[l]:= \{1,\dots, l\}$. 
Also, by abusing of notation, if $I$ is a subset of the simple roots, or if $J$ is a 
subset of the set of fundamental weights, then we identify them by the sets of indices of the elements
that they contain, so, $I,J \subseteq [l]$. 
\end{Notation}
As before, we let $\varSigma$ denote the Dynkin diagram of $(G,B,T)$. For $I\subset [l]$,
we denote by $\varSigma_I$ the subdiagram of $\varSigma$ constituted by the vertices $v_i$,
where $i\in I$.

\begin{Definition}\label{D:essential}
A pair $(I,J)$ corresponding to a face $F_{I,J}$ of $\KK\cap \CC$ is called an {\em essential} if 
no connected component of the complement of $J$ is entirely contained in $I$. 
\end{Definition}

According to Vinberg~\cite[Theorem 6]{Vinberg1}, the $G\times G$-orbits in $M$ are in one-to-one correspondence with 
the essential faces of $\KK\cap \CC$; the inclusion order between the closures of the $G\times G$-orbits 
is equivalent to the inclusion order on essential faces:  
\begin{align}\label{A:Flattice}
F_{I_1,J_1} \subseteq F_{I_2,J_2} \iff I_1 \subseteq I_2 \ \text{ and } J_1 \subseteq J_2,
\end{align}
where $I_i,J_i \subseteq [l]$ for $i\in \{1,2\}$.
We will denote lattice of all faces of $\KK\cap \CC$ by $\overline{\mathcal{L}}$;
the sublattice of the essential faces will be denoted by $\mathcal{L}$.

\begin{Remark}\label{R:isomorphic}
The association $F_{(I,J)} \leftrightsquigarrow e_{([l]\setminus I , [l]\setminus J)}$ is 
a lattice isomorphism between $\mathcal{L}$ and the cross-section lattice $\Lambda$. 
This isomorphism extends to give an isomorphism between the lattices $\overline{\mathcal{L}}$ and $E(\overline{T})$. 
\end{Remark}

\begin{Lemma}\label{L:wedgevee}
Let $F_{I_1,J_1}$ and $F_{I_2,J_2}$ be two essential faces from $\mathcal{L}$. 
Then we have 
\begin{align*}
F_{I_1,J_1} \wedge F_{I_2,J_2} = F_{I_1\cap I_2, J_1\cap J_2} \ \text{ and } \ 
F_{I_1,J_1} \vee F_{I_2,J_2} = F_{I_1\cup I_2, J_1\cup J_2 \cup N},
\end{align*}
where $N$ is the union of the connected components $A_1,\dots , A_r$ of $[l]\setminus J_1\cup J_2$ such that $A_j\subseteq I_1\cup I_2$
for $j\in \{1,\dots, r\}$. 
\end{Lemma}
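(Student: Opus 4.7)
My plan is to work directly with the face-lattice description in (\ref{A:Flattice}) and to treat the two assertions in parallel: in each case I will first write down the naive coordinate-wise candidate (intersection for the meet, union for the join), and then check essentiality in the sense of Definition~\ref{D:essential}, enlarging $J$ when necessary in the join case. The order on $\overline{\mathcal{L}}$ is coordinate-wise inclusion, so once the candidate is shown to be essential the universal property of $\wedge$ (resp.\ $\vee$) will be immediate.

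For the meet, $F_{I_1 \cap I_2, J_1 \cap J_2}$ is manifestly contained in both $F_{I_i, J_i}$ by (\ref{A:Flattice}), so it suffices to show that the pair $(I_1 \cap I_2, J_1 \cap J_2)$ is essential. Let $K$ be a connected component of $[l] \setminus (J_1 \cap J_2) = ([l] \setminus J_1) \cup ([l] \setminus J_2)$. Then $K$ meets $[l] \setminus J_i$ for some $i$, and since $[l] \setminus J_i \subseteq [l] \setminus (J_1 \cap J_2)$ and connected components are maximal connected subsets, any connected component $D$ of $[l] \setminus J_i$ that meets $K$ is forced to lie inside $K$. By the essentiality of $(I_i, J_i)$ we have $D \not\subseteq I_i$, hence $K \supseteq D \not\subseteq I_i \supseteq I_1 \cap I_2$. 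This gives essentiality, and because $F_{I_1 \cap I_2, J_1 \cap J_2}$ is then the largest element of $\mathcal{L}$ contained in both $F_{I_i, J_i}$, it is indeed the meet.

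For the join I will show that $(I_1 \cup I_2,\, J_1 \cup J_2 \cup N)$ is essential and then that it is the minimum essential upper bound. Essentiality uses the key combinatorial fact that, because $N$ is by construction a union of entire connected components of $[l] \setminus (J_1 \cup J_2)$, removing $N$ does not merge or fragment the remaining components; therefore the connected components of $[l] \setminus (J_1 \cup J_2 \cup N)$ are precisely those components of $[l] \setminus (J_1 \cup J_2)$ which are not contained in $I_1 \cup I_2$, and none of them sits in $I_1 \cup I_2$ by definition of $N$. For minimality, suppose $(A, B) \in \mathcal{L}$ satisfies $A \supseteq I_1 \cup I_2$ and $B \supseteq J_1 \cup J_2$. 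If some component $C$ appearing in $N$ failed to lie in $B$, pick $x \in C \setminus B$ and let $D$ be the connected component of $[l] \setminus B$ through $x$; since $D$ is connected and $[l] \setminus B \subseteq [l] \setminus (J_1 \cup J_2)$, maximality of $C$ forces $D \subseteq C \subseteq I_1 \cup I_2 \subseteq A$, contradicting essentiality of $(A, B)$. Hence $B \supseteq J_1 \cup J_2 \cup N$, and the claimed join formula follows.

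The only real obstacle is keeping straight the interaction between connected components of different subdiagrams of $\varSigma$; in both halves of the proof the mechanism that saves us is the same, namely that a connected subset of a larger complement $[l] \setminus J$ which meets a component $C$ must be wholly contained in $C$, together with the fact that enlarging the removed set by a union of entire components of a given subdiagram never creates new components. Once this observation is isolated, both assertions reduce to a short diagram chase on $I$'s and $J$'s.
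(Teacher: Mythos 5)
Your proof is correct and follows essentially the same route as the paper's: verify that the coordinate-wise candidates are essential and then invoke the universal properties via the coordinate-wise order (\ref{A:Flattice}). The only difference is that you carefully supply the two combinatorial verifications (essentiality of $(I_1\cap I_2, J_1\cap J_2)$, and the forcing argument showing any essential upper bound must contain $N$) that the paper dismisses as ``immediate'' and ``minimally satisfied.''
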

\begin{proof}
The first equality is an immediate consequence of the ``definition'' in (\ref{A:Flattice}).
We proceed to prove the second equality. 
Clearly, if $F_{A,B} \geq F_{I_i,J_i}$ for $i\in \{1,2\}$, then $A\supseteq I_1\cup I_2$ and $B\supseteq J_1\cup J_2$. 
Therefore, in order for $F_{A,B}$ be equal to $F_{I_1,J_1} \vee F_{I_2,J_2}$, first, we must have $A= I_1\cup I_2$. 
Secondly, the condition that no connected component of the complement of $B$ is entirely contained in $I_1\cup I_2$ must be satisfied. 
This condition is minimally satisfied, if we adjoint to $J_1\cup J_2$ the components $A_1,\dots, A_r$ of $[l]\setminus J_1\cup J_2$ such that $A_i \subseteq I_1\cup I_2$. But this is exactly our second claim, hence, the proof is finished. 
\end{proof}

Let $L$ be a lattice with a minimal element $\hat{0}$. 
An element $x$ in $L$ is called an {\em atom} if $x$ covers $\hat{0}$. 
It is easy to see that in $\mathcal{L}$ the atoms are given by $F_{\{i\},\emptyset}$ ($i\in [l]$)
and $F_{\emptyset, \{j\}}$ ($j\in [l]$).
A lattice $L$ is said to be {\em atomic} if every element $x\in L$ is a join of atoms. 
It is pointed out by Vinberg~\cite[Section 0.6]{Vinberg1} that $\KK\cap \CC$ is a {\em simplicial cone},
that is, a cone generated by linearly independent vectors. 
It is well-known that the lattice of faces of a simplicial cone is a Boolean lattice.
Therefore, $\overline{\mathcal{L}} \cong E(\overline{T})$ is a Boolean lattice. 
In particular, $E(\overline{T})$ is an atomic lattice. 
Note that since $\dim T = 2|S|$ is the height of $E(\overline{T})$, it has exactly $2|S|$ atoms. 
Therefore, the sets of atoms of $\Lambda$ and $E(\overline{T})$ are equal. 
This argument shows that $\Lambda$ is an atomic lattice. 
In the next proposition, we prove this result more directly. 

\begin{Proposition}\label{P:Atomic}
Let $F_{I,J}$ be an element from $\mathcal{L}$.
Then it has the following decomposition:
\[
F_{I,J}= \bigvee_{i\in I} F_{\{i\},\emptyset} \vee \bigvee_{j\in J} F_{\emptyset, \{j\}}.
\]
In particular, $\mathcal{L}$, hence, the cross-section lattice $\Lambda$ of $M$, is an atomic lattice.
\end{Proposition}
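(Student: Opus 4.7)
The plan is to establish the decomposition by iterating Lemma~\ref{L:wedgevee}, computing the two partial joins $\bigvee_{j\in J} F_{\emptyset,\{j\}}$ and $\bigvee_{i\in I} F_{\{i\},\emptyset}$ separately and then combining them.

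For the $J$-side, an induction on $|J|$ is essentially immediate: in the formula from Lemma~\ref{L:wedgevee}, the augmentation set $N$ is a union of connected components of $[l]\setminus(J_1\cup J_2)$ that are contained in $I_1\cup I_2=\emptyset$, so $N$ is empty at every stage. This yields
\[
\bigvee_{j\in J} F_{\emptyset,\{j\}}=F_{\emptyset,J}.
\]

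For the $I$-side, I would use the running assumption that $(G_0,G_0)$ is simple, so the Dynkin diagram $[l]$ is connected and is its own unique connected component. Iterating Lemma~\ref{L:wedgevee} with the $J$-coordinates all empty, the augmentation acquires $[l]$ only when $I_1\cup I_2$ has grown to all of $[l]$. Thus
\[
\bigvee_{i\in I} F_{\{i\},\emptyset}=F_{I,N_I},
\]
where $N_I=[l]$ if $I=[l]$ and $N_I=\emptyset$ otherwise.

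Finally I would compute $F_{\emptyset,J}\vee F_{I,N_I}$ via one more application of Lemma~\ref{L:wedgevee}, obtaining $F_{I,\,J\cup N_I\cup N'}$, where $N'$ is the union of components of $[l]\setminus(J\cup N_I)$ contained in $I$. The essentiality of $F_{I,J}$ handles both terms. If $I\subsetneq[l]$, then $N_I=\emptyset$ and $N'=\emptyset$ is exactly the essentiality condition (no component of $[l]\setminus J$ sits inside $I$). If $I=[l]$, essentiality forces $J=[l]$, so $N_I\cup J=[l]=J$ and $[l]\setminus J=\emptyset$, hence $N'=\emptyset$. In either case the join is $F_{I,J}$, proving the decomposition. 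Atomicity of $\mathcal{L}$ follows immediately; atomicity of the cross-section lattice $\Lambda$ is then transported along the lattice isomorphism of Remark~\ref{R:isomorphic}. The only real obstacle is the bookkeeping of the augmentation term $N$ across iterations; once one notices that connectedness of the Dynkin diagram keeps $N$ trivial except in the single degenerate case $I=[l]$, everything else is a straightforward application of Lemma~\ref{L:wedgevee}.
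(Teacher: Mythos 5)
Your proposal is correct and follows essentially the same route as the paper: both proofs reduce the claim to iterated applications of Lemma~\ref{L:wedgevee}, splitting the join into the $I$-part and the $J$-part and using essentiality of $F_{I,J}$ (together with connectedness of the Dynkin diagram) to see that the augmentation set $N$ stays empty. Your treatment is slightly more careful in that it tracks $N$ explicitly and handles the degenerate case $I=[l]$ (where necessarily $J=[l]$ and the face is the top element) directly, whereas the paper dismisses that case as a harmless reduction.
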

\begin{proof}
Without loss of generality we may assume that $I\neq [l]$. Indeed, 
there is a unique face $F_{I,J}$ with $I=[l]$; it is the top element of $\mathcal{L}$. 
Now, both of the faces $F_{I,\emptyset }$ and $F_{\emptyset,J} $ are elements in $\mathcal{L}$. 
It follows from Lemma~\ref{L:wedgevee} that $F_{I,J} = F_{I,\emptyset } \vee F_{\emptyset,J}$. 
It also follows from  Lemma~\ref{L:wedgevee} that $F_{I,\emptyset }=\bigvee_{i\in I} F_{\{i\},\emptyset}$
and $F_{\emptyset,J}=\bigvee_{j\in J} F_{\emptyset, \{j\}}$. This finishes the first part of the proof. 
Our second claim follows from Remark~\ref{R:isomorphic}.
\end{proof}

\section{A Generating Function}\label{S:Generatingfunction}

In this section, we will determine the generating series of the number $d_n$ of $G\times G$-orbits in 
$\textrm{Env}(G_0)$ for $G_0 := \textrm{SL}_{n+1}$. 
The Dynkin diagram of $G_0$, which we denote by $\varSigma_n$, 
has $n$-nodes labeled with the simple roots $\alpha_1,\dots, \alpha_n$. 
By Definition~\ref{D:essential}, our problem is equivalent to counting 
pairs $(I,J)$ such that 
\begin{align}\label{A:equivalent}
\text{every connected component of $J$ intersects $I$.}
\end{align}

First, we find a recurrence for the $d_n$. Clearly, $d_1 = 3$, so, we assume that $n>1$.
We split our count into two disjoint sets:

\begin{enumerate}
\item[(1)] $D_n'$: the set of pairs $(I,J)$ satisfying (\ref{A:equivalent}) and $J=\emptyset$;
\item[(2)] $E_n$: the set of pairs $(I,J)$ satisfying (\ref{A:equivalent}) and $J\neq \emptyset$.
\end{enumerate}
Clearly, $d_n = |D_n'|+ |E_n |$.
If $J\neq \emptyset$, then $I\neq \emptyset$. But if $J=\emptyset$, then $I$ 
can be any of the $2^n$ subsets of $\varSigma_n$, hence, $|D_n'|= 2^n$.

We proceed to find a formula (recurrence) for $e_n$. 
Once again, we split our problem into two parts: 
\begin{enumerate}
\item[(2.1)] counting $(I,J) \in E_n$ such that $1 \notin J$;

\item[(2.2)] counting $(I,J)\in E_n$ such that $1 \in J$. 
\end{enumerate}

In the former case, $1$ may, or may not, belong to $I$. 
In both of these cases, by removing $\alpha_1$ from $\varSigma_n$,
and relabeling the nodes, we obtain a pair $(I',J')$ (where $J'=J$) in $E_{n-1}$. 
Conversely, by appending $\alpha_1$ to $\varSigma_{n-1}$ as the new first node,
from any pair $(I',J')$ in $E_{n-1}$, we obtain two new pairs $(I,J)$ 
in $E_n$ such that $J=J'$.  Therefore, the number of such pairs is given by $2|E_{n-1}|$.

Now, in the latter case, we look at the following two disjoint situations: 
\begin{enumerate}
\item[(2.2.1)] counting $(I,J) \in E_n$ such that $[n] = J$;
\item[(2.2.2)] counting $(I,J)\in E_n$ such that $[s] \subseteq J$, where $s\in \{1,\dots, n-1\}$,
and $s+1\notin J$. 
\end{enumerate}

In the former case, $I$ can be any nonempty subset of $[n]$, therefore, we get a contribution 
of $2^n-1$ from (2.2.1). 
We proceed with the latter case. To this end, let us fix a subset $J$ of $[n]$ such that 
$[s] \subseteq J$, where $s\in \{1,\dots, n-1\}$, and $s+1\notin J$. 
For such $J$, we will analyze the possibilities for $(I,J)$.
The intersection $I\cap [s]$ is allowed to be any nonempty subset of $[s]$. 
Also, $s+1$ may or may not be an element of $I$. Next, we look at 
the tails, namely, the intersections $I\cap \{s+2,\dots, n\}$ and $J\cap \{s+2,\dots, n\}$.
Clearly, the intersection $J\cap \{s+2,\dots, n\}$ might be empty, or not. If it is empty,
then $I\cap \{s+2,\dots, n\}$ can be chosen arbitrarily, so, it gives $2^{n-(s+1)}$ possibilities. 
If $J\cap \{s+2,\dots, n\}$ is nonempty, then $I\cap \{s+2,\dots, n\}$ can be chosen in
one of the $|E_{n-(s+1)}|$ possible ways. 
Thus, the possibilities for $(I,J)$ are exhausted, and we arrive at a formula for the 
cardinality of $E_n$, 
\begin{align}\label{A:recurrence for e}
e_n = 2 e_{n-1} + 2^n-1 + \sum_{s=1}^{n-1} 2 (2^s-1) (e_{n-(s+1)} + 2^{n-(s+1)}).
\end{align}
We set $e_0:=0$ and $e_1:=1$.
By reorganizing the right hand side of (\ref{A:recurrence for e}), we obtain the following lemma. 
\begin{Lemma}\label{L:e's}
For every positive integer $n$ with $n>1$, the following recurrence formula hold:
\begin{align}\label{A:recurrence for e 2}
e_n = 2 e_{n-1} + \sum_{s=1}^{n-1}  (2^{s+1}-2) e_{n-(s+1)} + (n-1) 2^n + 1.
\end{align}
\end{Lemma}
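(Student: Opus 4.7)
The plan is to verify the identity (\ref{A:recurrence for e 2}) by a direct algebraic simplification of (\ref{A:recurrence for e}). Since this is a reorganization rather than a derivation from first principles, the combinatorial bookkeeping behind $e_n$ is already done, and all that remains is arithmetic. My first step would be to distribute the factor $2(2^s-1)$ across the parenthesized expression $(e_{n-(s+1)} + 2^{n-(s+1)})$ appearing in (\ref{A:recurrence for e}). Because $2(2^s-1) = 2^{s+1}-2$, the $e$-terms immediately assume the form $\sum_{s=1}^{n-1}(2^{s+1}-2)\,e_{n-(s+1)}$, matching the $e$-sum in (\ref{A:recurrence for e 2}) without any further work.

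The only nontrivial portion is then the non-$e$ tail, namely
\[
S \;:=\; 2^n - 1 + \sum_{s=1}^{n-1} (2^{s+1} - 2)\cdot 2^{n-s-1}.
\]
I would simplify each summand of the sum by noting that $(2^{s+1}-2)\cdot 2^{n-s-1} = 2^n - 2^{n-s}$, so that
\[
S \;=\; 2^n - 1 + (n-1)\,2^n - \sum_{s=1}^{n-1} 2^{n-s}.
\]
A standard geometric-series evaluation gives $\sum_{s=1}^{n-1} 2^{n-s} = 2^{n-1}+2^{n-2}+\cdots +2 = 2^n-2$, and substituting this yields $S = 2^n - 1 + (n-1)\,2^n - 2^n + 2 = (n-1)\,2^n + 1$. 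This is exactly the non-$e$ contribution in (\ref{A:recurrence for e 2}), completing the argument.

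I do not expect any genuine obstacle: the entire verification consists of distribution and a single geometric-series identity. The lemma is essentially bookkeeping—it repackages the recurrence (\ref{A:recurrence for e}) into a form where the coefficient $2^{s+1}-2$ of $e_{n-(s+1)}$ sits cleanly inside the convolution-like sum, which is the form needed for translating the recurrence into an identity for the generating function $\sum_{n\geq 0} e_n x^n$ in the proof of Theorem~\ref{T:numbers}.
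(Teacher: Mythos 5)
Your proof is correct and is exactly the route the paper takes: the paper simply states that the lemma follows ``by reorganizing the right hand side of (\ref{A:recurrence for e})'', and your distribution of $2(2^s-1)$ together with the geometric-series evaluation $\sum_{s=1}^{n-1}2^{n-s}=2^n-2$ supplies precisely that reorganization. All the arithmetic checks out.
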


It is easy to verify (by hand) and by Lemma~\ref{L:e's} that $e_2= 7$ and that $e_3=33$. 
Next, we will determine a closed formula for the generating series, 
\begin{align}\label{A:Ex}
E(x):=\sum_{n\geq 0} e_n x^n.
\end{align}
Let us first introduce the notation $a_i := 2^{i+2}-2$ for $i\in \N$. 
By modifying the limits of the summation on the right hand side of eqn. (\ref{A:recurrence for e 2}), 
we get 
\begin{align}\label{A:recurrence for e 3}
e_n = 2 e_{n-1} + \sum_{i=0}^{n-2}  a_i e_{(n-2)-i} + (n-1) 2^n + 1.
\end{align}
Thus, by multiplying both sides of eqn. (\ref{A:recurrence for e 3}) by $x^n$ and then 
by taking the sum over $n$ with $n\geq 2$,
we obtain
\begin{align*}
E(x) -  x &= 2 x E(x)  + 
x^2 \sum_{n\geq 2} \left( \sum_{i=0}^{n-2}  a_i e_{(n-2)-i} \right)x^{n-2}
+ \sum_{n\geq 2} ((n-1)2^n +1)x^n \\
&= 2 x E(x)  + x^2 E(x) \sum_{n\geq 0}  a_n x^n  + \sum_{n\geq 2} ((n-1)2^n +1)x^n \\
&= 2 x E(x)  + x^2 E(x)\left( \frac{4}{1-2x}- \frac{2}{1-x}\right)  + \sum_{n\geq 2} ((n-1)2^n +1)x^n. \\
\end{align*}
By solving for $E(x)$, we obtain
\begin{align*}
E(x) &= \frac{x+ \sum_{n\geq 2} ((n-1)2^n +1)x^n }{1-2x- x^2 \left( \frac{4}{1-2x}- \frac{2}{1-x}\right) }.
\end{align*}
The denominator is given by 
\begin{align*}
1-2x- x^2 \left( \frac{4}{1-2x}- \frac{2}{1-x}\right)  = \frac{ 1-5x + 6x^2 -4x^3}{(1-2x)(1-x)}.
\end{align*}
But the numerator is easy to compute as well,
\begin{align*}
x+ \sum_{n\geq 2} ((n-1)2^n +1)x^n  &= x+ \sum_{n\geq 2} n(2x)^n - \sum_{n\geq 2} (2x)^n + \sum_{n\geq 2} x^n \\
 &= x+ 2x \left( \frac{1}{(1-2x)^2}-1 \right) - \frac{1}{1-2x} +1 +2x + \frac{1}{1-x}-1-x \\
    &= \frac{ x   }{(1-2x)^2(1-x)}.
\end{align*}
Thus we have a cleaner formula for $E(x)$,
\begin{align}\label{A:E}
E(x) = \frac{ \frac{ x   }{(1-2x)^2(1-x)} }{  \frac{ 1-5x + 6x^2 -4x^3}{(1-2x)(1-x)}} 
= \frac{ x   }{(1-2x)(1-5x + 6x^2 -4x^3)}. 
\end{align}

We are now ready to prove our formulation of the generating series for $d_n$.
For convenience, we set $d_0 :=1$. 

\begin{Theorem}\label{T:numbers}
Let $G$ denote $(\textrm{SL}_n\times T_0)/Z_{ \textrm{SL}_n}$, where $T_0$ is the maximal torus in $ \textrm{SL}_n$,
and $Z_{ \textrm{SL}_n}$ is center of $ \textrm{SL}_n$. 
The generating series of the number $d_n$, $n=0,1,2,\dots$ of $G\times G$-orbits 
in the enveloping monoid of $\textrm{SL}_n$ is given by 
\begin{align}
\sum_{n\geq 0} d_n x^n  
= \frac{ 1-2x+2x^2   }{(1-5x + 6x^2 -4x^3)}   = 1+ 3x + 11x^2 + 41x^3 + 151x^4 + 553 x^5 + O(x^6).
\end{align}
\end{Theorem}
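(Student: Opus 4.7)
My plan is to reduce the count $d_n$ to the essential-pair count $e_n$ that has already been controlled via the generating series $E(x)$, and then assemble the remaining pieces into a single rational function.

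First, by Vinberg's parametrization of $G\times G$-orbits via essential faces (see~(\ref{A:Faces}), (\ref{A:Flattice}), and Remark~\ref{R:isomorphic}), $d_n$ equals the number of pairs $(I,J)$ with $I,J\subseteq [n]$ satisfying condition (\ref{A:equivalent}). This set splits as $D_n' \sqcup E_n$: pairs with $J=\emptyset$, of which there are exactly $2^n$ (since $I$ can be arbitrary and the essentiality condition is vacuous when $J$ is empty), and pairs with $J\neq\emptyset$, of which there are $e_n$. Together with the convention $d_0:=1$ and the initial value $e_0=0$, this yields the identity
\[
d_n = 2^n + e_n \qquad \text{for every } n\geq 0.
\]

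Next I will translate this identity into generating series. Since $\sum_{n\geq 0} 2^n x^n = 1/(1-2x)$, summing against $x^n$ gives
\[
\sum_{n\geq 0} d_n x^n \;=\; \frac{1}{1-2x} + E(x),
\]
where $E(x)$ is given by the closed form derived in~(\ref{A:E}). Placing both summands over the common denominator $(1-2x)(1-5x+6x^2-4x^3)$, the numerator collapses to $(1-5x+6x^2-4x^3) + x = 1-4x+6x^2-4x^3$.

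The concluding step is purely algebraic: one checks by a direct expansion that
\[
1-4x+6x^2-4x^3 \;=\; (1-2x)(1-2x+2x^2),
\]
so that the factor $1-2x$ cancels from numerator and denominator, leaving the asserted closed form. I do not anticipate any serious obstacle; the substantive combinatorial work was already absorbed into the recursive analysis leading to Lemma~\ref{L:e's} and the evaluation of $E(x)$ in~(\ref{A:E}). A direct comparison of the first few Taylor coefficients of the resulting rational function against $1,\,3,\,11,\,41,\,151,\,553$ will serve as a final sanity check.
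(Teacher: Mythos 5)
Your proof is correct and follows essentially the same route as the paper: the identity $d_n = 2^n + e_n$, the geometric series $\sum 2^n x^n = 1/(1-2x)$, and the closed form (\ref{A:E}) for $E(x)$, combined over the common denominator $(1-2x)(1-5x+6x^2-4x^3)$. The explicit factorization $1-4x+6x^2-4x^3=(1-2x)(1-2x+2x^2)$ that you supply is precisely the ``simple calculation'' the paper leaves to the reader, and your verification that $d_n=2^n+e_n$ also holds for $n=0,1$ under the conventions $e_0=0$, $e_1=1$, $d_0=1$ is a welcome tightening of the paper's ``for all $n\geq 2$''.
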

\begin{proof}
We already mentioned that $d_n = 2^n + e_n \ \text{ for all } n \in \Z_{\geq 2}$.
Since $\sum_{n\geq 0} 2^nx^n = 1/(1-2x)$, the proof follows from formula (\ref{A:E}) after a simple calculation.
\end{proof}

\bibliographystyle{plain}
\bibliography{referenc}

\begin{thebibliography}{10}

\bibitem{AB1}
Valery Alexeev and Michel Brion.
\newblock Stable reductive varieties. {I}. {A}ffine varieties.
\newblock {\em Invent. Math.}, 157(2):227--274, 2004.

\bibitem{Can:Dual}
Mahir {Bilen Can}.
\newblock {On the Dual Canonical Monoids}.
\newblock {\em arXiv e-prints}, page arXiv:1905.08316, May 2019.

\bibitem{Brion08}
Michel Brion.
\newblock Local structure of algebraic monoids.
\newblock {\em Mosc. Math. J.}, 8(4):647--666, 846, 2008.

\bibitem{Can:Mobile}
Mahir~Bilen Can.
\newblock Classification of reductive monoid spaces over an arbitrary field.
\newblock In {\em Advances in algebra}, volume 277 of {\em Springer Proc. Math.
  Stat.}, pages 67--94. Springer, Cham, 2019.

\bibitem{CanRenner}
Mahir~Bilen Can and Lex~E. Renner.
\newblock {$H$}-polynomials and rook polynomials.
\newblock {\em Internat. J. Algebra Comput.}, 18(5):935--949, 2008.

\bibitem{Huruguen}
Mathieu Huruguen.
\newblock Toric varieties and spherical embeddings over an arbitrary field.
\newblock {\em J. Algebra}, 342:212--234, 2011.

\bibitem{PPR97}
Edwin~A. Pennell, Mohan~S. Putcha, and Lex~E. Renner.
\newblock Analogue of the {B}ruhat-{C}hevalley order for reductive monoids.
\newblock {\em J. Algebra}, 196(2):339--368, 1997.

\bibitem{PutchaRenner88}
M.~S. Putcha and L.~E. Renner.
\newblock The system of idempotents and the lattice of {$J$}-classes of
  reductive algebraic monoids.
\newblock {\em J. Algebra}, 116(2):385--399, 1988.

\bibitem{Putcha}
Mohan~S. Putcha.
\newblock {\em Linear algebraic monoids}, volume 133 of {\em London
  Mathematical Society Lecture Note Series}.
\newblock Cambridge University Press, Cambridge, 1988.

\bibitem{Renner85}
Lex~E. Renner.
\newblock Classification of semisimple algebraic monoids.
\newblock {\em Trans. Amer. Math. Soc.}, 292(1):193--223, 1985.

\bibitem{Renner}
Lex~E. Renner.
\newblock {\em Linear algebraic monoids}, volume 134 of {\em Encyclopaedia of
  Mathematical Sciences}.
\newblock Springer-Verlag, Berlin, 2005.
\newblock Invariant Theory and Algebraic Transformation Groups, V.

\bibitem{Rittatore:Thesis}
Alvaro Rittatore.
\newblock {\em Mon{\"o}ides alg{\'e}briques et plongements des groupes}.
\newblock PhD thesis, Institut Fourier, 1997.

\bibitem{Rittatore2001}
Alvaro Rittatore.
\newblock Very flat reductive monoids.
\newblock {\em Publ. Mat. Urug.}, 9:93--121 (2002), 2001.

\bibitem{Vinberg2}
\`Ernest~B. Vinberg.
\newblock The asymptotic semigroup of a semisimple {L}ie group.
\newblock In {\em Semigroups in algebra, geometry and analysis ({O}berwolfach,
  1993)}, volume~20 of {\em De Gruyter Exp. Math.}, pages 293--310. de Gruyter,
  Berlin, 1995.

\bibitem{Vinberg1}
\`Ernest~Borisovich Vinberg.
\newblock On reductive algebraic semigroups.
\newblock In {\em Lie groups and {L}ie algebras: {E}. {B}. {D}ynkin's
  {S}eminar}, volume 169 of {\em Amer. Math. Soc. Transl. Ser. 2}, pages
  145--182. Amer. Math. Soc., Providence, RI, 1995.

\bibitem{Wedhorn}
Torsten Wedhorn.
\newblock Spherical spaces.
\newblock {\em Annales de l'Institut Fourier}, 68(1):229--256, 2018.

\end{thebibliography}

\end{document}